\documentclass{article}
\usepackage{graphicx} 
\usepackage[dvipsnames]{xcolor}
\usepackage{setspace}
\usepackage[hang,flushmargin,symbol*]{footmisc}
\usepackage{amsmath}
\usepackage{amsthm}
\usepackage{amssymb}
\usepackage{mathtools}
\usepackage{enumitem}
\usepackage{calc}
\usepackage{graphicx}
\usepackage{caption}
\usepackage[labelformat=simple,labelfont={}]{subcaption}
\usepackage{tikz}
\usepackage{tikz-cd}
\usetikzlibrary{decorations.markings}
\usetikzlibrary{arrows,shapes,positioning}
\usepackage{url}
\usepackage{array}
\usepackage{graphicx}
\usepackage{color}
\usepackage{mathrsfs}

\usepackage{verbatim}

\usepackage{subfiles}

\usepackage[colorinlistoftodos, loadshadowlibrary]{todonotes}

\usepackage{dashbox}

\theoremstyle{definition}
\newtheorem{theorem}{Theorem}[section]
\newtheorem{lemma}[theorem]{Lemma}

\newtheorem{corollary}[theorem]{Corollary}

\newtheorem{definition}[theorem]{Definition}
\newtheorem{example}[theorem]{Example}
\newtheorem{remark}[theorem]{Remark}

\usepackage{marginnote}

\usepackage{amsfonts}
\usepackage{amsmath}
\usepackage{amssymb}
\usepackage{mathrsfs}
\usepackage{tikz}
\usepackage{tikz-cd}
\usepackage[colorlinks=true]{hyperref}
\usepackage{ mathdots }
\usepackage{dutchcal}
\usepackage{verbatim}
\usepackage{amsmath,amsthm}
\usepackage{extarrows}

\usepackage{xparse}

\usepackage{multirow}

\usepackage[normalem]{ulem}

\renewcommand{\tilde}[1]{\ensuremath{\widetilde{#1}}}

\newcommand{\msout}[1]{\text{\sout{\ensuremath{#1}}}}

\newcommand{\OO}{\mathcal{O}}

\newcommand{\Aff}{{\mathbb{A}}}

\newcommand{\GG}{\mathbb{G}}

\newcommand{\Spec}{{\text{Spec}\:}}

\newcommand{\Log}{{\mathcal{L}}}

\newcommand{\lpbstrict}{{\arrow[dr, phantom, very near start, "\ulcorner \msout{\ell}"]}}

\newcommand{\action}{\:\rotatebox[origin=c]{-90}{$\circlearrowright$}\:}

\newcommand{\num}[1]{{\langle #1 \rangle}}

\newcommand{\Sym}{{\rm Sym}}

\renewcommand{\bar}[1]{\overline{#1}}

\newcommand{\HHl}[1]{{\rm HH}^\ell_{#1}}

\newcommand{\D}{\mathbf{D}}
\newcommand{\Aut}{\underline{\text{Aut}}}

\usepackage{stmaryrd}

\renewcommand{\i}{\tilde{i}}

\definecolor{sebgreen1}{rgb}{0.019,0.317,0.149}
\definecolor{sebgreen2}{rgb}{0.784,0.952,0.780}

\newcommand{\af}[1]{\Theta_{#1}}

\newcommand{\scr}[1]{{\ensuremath{\mathscr{#1}}}}

\newcommand{\bra}[1]{{\left[{#1}\right]}}

\title{Logarithmic Hochschild (co)homology of logarithmic orbifolds}

\author{Márton Hablicsek, Leo Herr, Francesca Leonardi}

\date{\today}

\begin{document}

\maketitle

\begin{abstract}
    Recently, the authors of this paper introduced logarithmic Hochschild (co)homology of logarithmic spaces in a geometric way using formality of derived intersections. In this paper, the authors extend the decomposition theorem for the logarithmic Hochschild (co)homology of firm orbifolds to general logarithmic orbifolds and consider two applications of the decomposition theorem. First, we consider two versions of a symmetric product and compute the logarithmic Hochschild homology of them. Second, we show that logarithmic Hochschild homology is invariant under root stack operations. 
\end{abstract}

\section{Introduction}
Let $X$ be a quasicompact, quasiseparated, weakly log separated\footnote{Recall that ``weakly log separated'' means the morphism $X \to \af{X}$ to the Artin fan is separated.}, finite type log algebraic stack over a field of characteristic 0. Recently, in \cite{HABLICSEK2026127}, the logarithmic Hochschild (co)homology of $X$ was defined via derived intersections. Namely, the authors defined the \textit{log Hochschild homology} $\HHl{X}$ as the endofunctor of a dg-enhancement of the derived category of coherent sheaves on $X$
\[i^*i_*:\D(X)\to \D(X)\]
where $i$ is the \textit{log diagonal} map $i:X\to X\times_{\af{X}}X$. Here the functors are derived functors and $\af{X}$ denotes the Artin fan of the log stack $X$. Throughout the paper, all functors will be derived unless stated otherwise.

The $n$-th Hochschild homology group of $X$ is defined as the hypercohomology groups of the endofunctor evaluated at the structure sheaf
\[R^n\Gamma(X,\HHl{X}(\OO_X)).\]
The authors proved that this definition of log Hochschild homology agrees with the definition of Olsson \cite{olsson2024loghochschild}.

The logarithmic Hochschild cohomology of $X$ is defined similarly via the endofunctor $i^!i_*:\D(X)\to \D(X)$.

As one of the applications, the authors showed an ``orbifold HKR'' decomposition result \cite[Theorem D]{HABLICSEK2026127} for the Hochschild (co)homology for certain ``firm'' logarithmic orbifolds $[X/G]$ where the group action is trivial on the Artin fan. 
The goal of this short paper is to extend the results of \cite{HABLICSEK2026127} in the orbifold direction. To do so, we prove the decomposition result in the case of general log orbifolds and drop the firm action assumption.

\begin{theorem}\label{thm:main}
Let $X$ be a log smooth scheme, $G$ a finite group acting on $X$. Then we have isomorphisms of $k$-vector spaces
\[R^n\Gamma(\HHl{[X/G]}(\OO_{[X/G]}))=\left(\bigoplus_{g\in G} R^n\Gamma(\HHl{X^g_{\log}}(\OO_{X^g_{\log}}))\right)^G=\]
\[=\left(\bigoplus_{g\in G} \bigoplus_{q-p=n}H^p(X^g_{\log}, \Omega^{q,\log}_{X^g_{\log}})\right)^G.\]
\end{theorem}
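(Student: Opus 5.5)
The plan is to compute the log diagonal of $\mathcal{X}=[X/G]$ explicitly and reduce to the log HKR theorem for the log smooth pieces $X^g_{\log}$. The Artin fan of $[X/G]$ receives a map from $\af{X}$, but in general it is \emph{not} $[\af{X}/G]$ with trivial action. This is exactly the non-firm situation that \cite[Theorem D]{HABLICSEK2026127} excludes.

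\textbf{Step 1: compute the log inertia.} First I identify the log inertia stack, i.e.\ the derived fiber product $\mathcal{X}\times_{\mathcal{X}\times_{\af{\mathcal{X}}}\mathcal{X}}\mathcal{X}$. Pulling back along the $G$-torsor $X\to\mathcal{X}$ (in both factors), the target becomes a quotient of $\coprod_{g\in G} X\times_{\af{\mathcal X}} X$, where the $g$-component is glued via the graph of $g$. The intersection of the diagonal with the $g$-twisted diagonal over the Artin fan is, by definition, the log fixed locus $X^g_{\log}$. This is presumably defined as the fixed locus computed in the log category, i.e.\ in $X\times_{\af{}}X$ rather than $X\times X$. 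So the underived intersection is $[\coprod_g X^g_{\log}/G]$, with $G$ acting by conjugation on the index.

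\textbf{Step 2: formality.} Next I show that the derived self-intersection of the log diagonal is formal along each component. In characteristic $0$, with $G$ finite, the $g$-component is a derived intersection of two smooth (over the Artin fan) sections of a log smooth morphism. Its derived structure is then $\Sym$ of the shifted excess bundle. Exactly as in the firm case of \cite{HABLICSEK2026127}, one gets
\[
i^*i_*\OO \simeq \bigoplus_g \Sym\bigl(\Omega^{\log}_{X^g_{\log}}[1]\bigr),
\]
using that the log normal bundle of $X^g_{\log}$ in the twisted diagonal is identified with the log tangent data, and that the $g$-invariant and non-invariant parts split because $g$ is semisimple. The resulting decomposition is $G$-equivariant.

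\textbf{Step 3: take cohomology.} Finally I apply $R\Gamma$ on the quotient stack. Since $|G|$ is invertible, cohomology on $[Y/G]$ is the $G$-invariants of cohomology on $Y$. Applying the log HKR theorem on each $X^g_{\log}$ then gives the two claimed equalities: the first from the decomposition into components, the second from $\HHl{X^g_{\log}}(\OO)\simeq\bigoplus_q\Omega^{q,\log}[q]$.

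\textbf{Main obstacle.} I expect the hardest part to be Step 1 together with the log smoothness of $X^g_{\log}$. Without firmness, the Artin fan of $\mathcal X$ is a nontrivial quotient, and $g$ may permute cones. One then needs to show that the fiber product over $\af{\mathcal X}$ really yields the log fixed locus, and that this locus is log smooth with the expected log cotangent bundle. To handle this, I would first work étale locally on $X$ around a point with stabilizer $H$. There I would reduce to a toric chart $\Aff_P\times\Aff^m$ with $H$ acting compatibly on the monoid $P$. Then I would compute $X^g_{\log}$ directly as a toric stratum fixed by $g$, and check the formality and excess bundle computation there.
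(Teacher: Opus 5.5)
Your Steps 1--3 follow the paper's argument. Because $G$ acts trivially on $\af{[X/G]}$, one decomposes $\iota^*\iota_*$ as $\bigoplus_g\tilde i_g^*\tilde i_*$ over the twisted diagonals $\tilde i_g\colon X\to X\times_{\af{[X/G]}}X$. One then takes $X^g_{\log}$ to be, by definition, the intersection of $\tilde i$ and $\tilde i_g$, and applies the excess-bundle formality theorem, whose hypotheses are first-order splitting via a projection, $E$ locally free, and a split excess sequence. The argument finishes with $G$-invariants and log HKR on each $X^g_{\log}$.

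\textbf{The gap.} It lies in the step you single out yourself. In Step 2 you only assert that the normal data are ``the log tangent data'', and the local plan you offer would not work as stated. First, an $H$-equivariant neat chart on which $H$ acts only through $\mathrm{Aut}(P)$ need not exist. Take $X=\Aff^1$ with its toric log structure: then $\widehat{X^2}\cong\Aff^1\times\GG_m$ and the swap is $(v,t)\mapsto(tv,t^{-1})$. At the fixed point $(0,-1)$ the swap acts trivially on $\bar M=\mathbb N$, yet it multiplies every chart element by a unit taking the value $-1$ there. Second, these are exactly the points where $X^g_{\log}$ differs from the ordinary fixed locus, so $X^g_{\log}$ is not ``a toric stratum fixed by $g$''. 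Here the stratum $\{v=0\}$ is $g$-stable with trivial action on $\bar M$, but $(0,-1)\notin X^g_{\log}$. Likewise, for transitive $\sigma$ acting on $X^n$, the log fixed locus is the diagonal copy of the open set $X_0$.

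\textbf{The missing idea.} It is a comparison of the fiber products over the two Artin fans, and it makes charts unnecessary. Since $\af X\to\af{[X/G]}$ is strict, étale and representable, its diagonal is an open immersion. Hence $X\times_{\af X}X\to X\times_{\af{[X/G]}}X$ is an open immersion, and $\tilde i$ factors through it. As $\tilde i$ is strict, this gives $N^\vee_{\tilde i}\cong\Omega^{1,\log}_X$ exactly as in the firm case (Lemma 4.5 of \cite{HABLICSEK2026127}). Pulling back along $\tilde i_g$, the conormal bundle of $X^g_{\log}$ in $X$ is the cokernel of $1-g^*$ on $\Omega^{1,\log}_X|_{X^g_{\log}}$. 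So $E^\vee$ is the kernel, namely the $g$-invariants, and the excess sequence splits because $g$ has finite order in characteristic $0$. Identifying this sequence with the log conormal sequence of $X^g_{\log}\to X$ gives $E^\vee\cong\Omega^{1,\log}_{X^g_{\log}}$ and the log smoothness of $X^g_{\log}$. These are precisely the inputs your Steps 2 and 3 need.
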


We have two applications in mind for Theorem \ref{thm:main}. First, we consider two version of a symmetric product and compute the logarithmic Hochschild (co)homology of the two versions. Second, we generalize Theoreom C of \cite{HABLICSEK2026127} and show that if $(X,D)$ is a smooth pair ($X$ is a smooth variety, $D$ is a smooth Cartier divisor), then the logarithmic Hochschild (co)homology of the $n$-th root stack $\sqrt[n]{(X,D)}$ agrees with the logarithmic Hochschild (co)homology of $(X,D)$. This shows that logarithmic Hochschild (co)homology is invariant under log alteration (under blow-ups and root stacks). Strictly speaking, we do not need Theorem \ref{thm:main} for the root stack case, our previous theorem (Theorem D of \cite{HABLICSEK2026127}) suffices (even though, in general, the root stack is not a global quotient orbifold).

\textbf{Acknowledgement:} M.H. was supported by the MTA Distinguished Guest Scientist Fellowship Programme 2026 and some part of the work was done at the Budapest University of Technology and Economics. M.H. thanks \'Ad\'am Gyenge for important conversations.

\section{Log Hochschild of Orbifolds}

Let $(X, M_X)$ be a log scheme equipped with a log structure $M_X$, and assume that a finite group $G$ acts on $X$ that respects the log structure. In other words, $G$ acts on the sheaf of monoids $M_X$ compatible with the action on the structure sheaf $\OO_X$. 

The orbifold $[X/G]$ has a natural log structure. Explicitly, the sheaf of monoids $M_{[X/G]}$ on $[X/G]$ is given as follows. A test scheme $T\to [X/G]$ is given as a $G$-torsor $P\to T$ and a $G$-equivariant map $f:P\to X$. 
The sections of the monoid $M_{[X/G]}(T)$ are defined as the $G$-invariant sections of $(f^*M_X)(P)$, i.e $\left((f^*M_X)(P)\right)^G$. 

\begin{lemma}
    The natural quotient map $\pi:X\to [X/G]$ is strict.
\end{lemma}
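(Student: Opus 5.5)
To show $\pi$ is strict, I will check that the natural map $\pi^*M_{[X/G]}\to M_X$ is an isomorphism of log structures on $X$. Since $X\to[X/G]$ is étale and surjective, it is enough to compute $M_{[X/G]}$ on the test object $T=X$ itself. The object $\pi: X\to[X/G]$ corresponds to the trivial $G$-torsor $P=G\times X\to X$ together with the equivariant action map $f:G\times X\to X$, $(g,x)\mapsto gx$. By definition,
\[M_{[X/G]}(X)=\left((f^*M_X)(G\times X)\right)^G .\]

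The first step is to identify this group of invariants. We have $G\times X=\coprod_{g\in G}X$, and on the $g$-th component $f$ restricts to the automorphism $g:X\to X$. Hence
\[(f^*M_X)(G\times X)=\prod_{g\in G}(g^*M_X)(X).\]
The $G$-action permutes the components and acts via the given isomorphisms $g^*M_X\cong M_X$. So a $G$-invariant family is determined by its component at $g=e$. Restriction to that component therefore gives an isomorphism $\left((f^*M_X)(G\times X)\right)^G\cong M_X(X)$. I will run the same computation for any étale $U\to X$, with $P=G\times U$, to get $\pi^*M_{[X/G]}|_{X_{\mathrm{et}}}\cong M_X$ as sheaves of monoids. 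I also need to check that the structure maps to $\OO$ agree; this holds because the identification is compatible with $\OO_X$ in the same way, given that the action on $M_X$ is compatible with $\alpha$.

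The step I expect to need the most care is making sure the presheaf defining $M_{[X/G]}$ really is the log structure on the stack, and that its pullback is computed by evaluating on étale $U\to X$. This is descent: log structures form an étale stack, and the data $(M_X,\{g^*M_X\cong M_X\})$ is exactly descent data along the étale cover $\pi$. Concretely, $M_{[X/G]}$ is the descended sheaf, and a descended sheaf pulls back to the original one. Once this is granted, the remaining checks are the bookkeeping above: invariants of an induced module $\prod_{g\in G}$ are the original module, and the map $\pi^*M_{[X/G]}\to M_X$ is the identity on the $e$-component. This is the same as the map in the definition of strictness, so $\pi$ is strict.
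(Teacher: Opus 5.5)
Your proposal is correct and follows the paper's proof: you identify the torsor over a test object $S\to X$ with the trivial torsor $G\times S$, and you take $G$-invariants of $(f^*M_X)(G\times S)$ to recover $M_X(S)$. The extra bookkeeping you add makes explicit what the paper's one-line equality $(M_X(G\times S))^G=M_X(S)$ leaves implicit: the splitting of $G\times U$ into components, the induced-module argument, compatibility with the structure map, and the descent remark.
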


\begin{proof}
    We compare the sheaf of monoids $\pi^*M_{[X/G]}$ and $M_X$. Note that the map $\pi: X\to [X/G]$ corresponds to the trivial torsor $G\times X\to X$ with the $G$-equivariant action map $G\times X\to X$. Now, consider a test scheme $S\to X$.
    
    We will show that $\pi^*M_{[X/G]}(S)=M_X(S)$. Using the quotient map $\pi:X\to [X/G]$, we can regard $S$ as a scheme over $[X/G]$ and equate $S\times_{[X/G]}X=G\times S$. Thus, $\pi^*M_{[X/G]}(S)=(M_X(G\times S))^G=M_X(S)$ proving our claim. 
\end{proof}

Using the universal property of the Artin fan $\af{X}$, we get a natural map $\af{X}\to \af{[X/G]}$ making the following diagram commutative
\[
\begin{tikzcd}
    X \ar[r, "\pi"]\ar[d] & {[X/G]} \ar[d]\\
    \af{X}\ar[r] & \af{[X/G]}.
\end{tikzcd}
\]
There's an induced action $G \action \af{X}$ on the Artin fan, and $\af{\bra{X/G}}$ is the relative coarse moduli space of $\bra{\af{X}/G}$ over $\Log$.


To compute the logarithmic Hochschild co/homology of the orbifold $[X/G]$, we need to consider the self-intersection of the log diagonal
\[ \iota: [X/G] \to [X /G] \times_{\af{[X/G]}} [X/G]. \]
Since the Artin fan $\af{[X/G]}$ has a trivial $G$-action, the latter space is isomorphic to $[X\times_{\af{[X/G]}} X / G \times G]$ where $G\times G$ acts componentwise. Therefore, the endofunctors $\iota^* \iota_*$ and $\iota^! \iota_*$ decompose as
\[\iota^* \iota_* \simeq \bigoplus_{g \in G} \i_{g}^* \i_* \quad \mathrm{and} \quad \iota^! \iota_* \simeq \bigoplus_{g \in G} \i_{g}^! \i_*\]
where $\i, \i_g : X \to X \times_{\af{[X/G]}} X$ are defined as the diagonal $\i(x)=(x,x)$ and twisted diagonal maps $\i_g(x)=(x,g.x)$ as in \cite{arinkin2019orbifold}. 

We analyze the space $X\times_{\af{[X/G]}}X$ and the maps $\i$, $\i_g$.

\begin{lemma}
    For a logarithmic scheme $X$, the log algebraic stack $X \times_{\af{[X/G]}} X$ is representable by log algebraic spaces.
\end{lemma}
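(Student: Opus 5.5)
The plan is to reduce representability of $X \times_{\af{[X/G]}} X$ to representability of the map $X \to \af{[X/G]}$ by algebraic spaces. The fiber product $X \times_{\af{[X/G]}} X$ is the base change of $X \to \af{[X/G]}$ along the same map from the second factor. Representable morphisms are stable under base change and composition. So if $X \to \af{[X/G]}$ is representable, then the projection $X\times_{\af{[X/G]}} X \to X$ is representable by algebraic spaces. Since $X$ is a scheme, the source is then an algebraic space, and we give it the fiber product log structure.

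To show $X \to \af{[X/G]}$ is representable, I factor it as
\[ X \to \af{X} \to \af{[X/G]}, \]
using the commutative square above. I would handle the two arrows separately.

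For the first arrow, the plan is to use that, for a log scheme, the structure map $X\to\af{X}$ is representable by algebraic spaces. This is part of the standard construction of Artin fans (Abramovich--Chen--Marcus--Wise): $\af{X}$ is étale locally a quotient $[\Aff_P/\GG(P^{gp})]$, and $X$ maps to it via a chart. Equivalently, $X\to\af{X}$ is a strict map to a stack whose diagonal is affine. I would cite this rather than reprove it.

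The second arrow is the main obstacle. The map $\af{X}\to\af{[X/G]}$ need not be representable, since $\af{[X/G]}$ is only the relative coarse space of $[\af{X}/G]$ over $\Log$, and stabilizers could jump. To avoid this I would argue directly on stabilizers: a morphism from an algebraic space to a stack is representable if and only if it is injective on automorphism groups of geometric points. The source $X$ is a scheme, so its points have trivial automorphism groups, and the condition holds automatically for the composite $X\to\af{[X/G]}$. For this criterion to apply, $\af{[X/G]}$ must be an algebraic stack with quasi-separated (indeed affine) diagonal. This is true because it is an Artin fan: étale over $\Log$, étale locally of the form $[\Aff_P/\GG(P^{gp})]$.

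With this route the first paragraph's factorization is not strictly needed: the stabilizer criterion gives representability of $X\to\af{[X/G]}$ directly. The base change argument then finishes the proof.
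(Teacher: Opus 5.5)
Your proof is correct, but it is arranged differently from the paper's. The paper writes $X\times_{\af{[X/G]}}X$ as the pullback of $X\times X\to\af{[X/G]}\times\af{[X/G]}$ along the diagonal of $\af{[X/G]}$. That diagonal is representable by algebraic spaces, so $X\times_{\af{[X/G]}}X\to X\times X$ is too, and its target is a scheme. You instead prove that $X\to\af{[X/G]}$ is representable and base change it along itself, landing over $X$.

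Underneath, the two arguments rest on the same fact. The standard proof that a morphism from an algebraic space $X$ to an algebraic stack $\mathcal Y$ is representable writes $X\times_{\mathcal Y}T\simeq(X\times T)\times_{\mathcal Y\times\mathcal Y}\mathcal Y$ and uses representability of $\Delta_{\mathcal Y}$, which is exactly the paper's step. So you can drop the stabilizer criterion and the hypothesis you check for it (quasi-separated or affine diagonal of $\af{[X/G]}$). Those matter only for the geometric-point version of the criterion. For a source that is an algebraic space, the automorphism groups are trivial over every test scheme, and algebraicity of $\af{[X/G]}$ is all you need.

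Each formulation has its use. The paper's strict pullback square is reused in the next lemmas (log \'etaleness, strictness, closed immersion). Yours shows directly that the projection $X\times_{\af{[X/G]}}X\to X$ is representable, and this is the map the paper later uses to split $\i$ and $\i_g$.

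One aside in your proposal is not correct, though nothing depends on it: $\af{X}\to\af{[X/G]}$ is in fact representable. Artin fans are \'etale and representable over $\Log$; the paper uses this for $\af{[X/G]}$ in the closed-immersion lemma. The map $\af{X}\to\af{[X/G]}$ is a map over $\Log$. If a composite $\af{X}\to\af{[X/G]}\to\Log$ is representable by algebraic spaces, then so is its first factor. Passing to the relative coarse space only removes the extra $G$-stabilizers of $[\af{X}/G]$, which $\af{X}$ never had, so your factorization route would also have worked.
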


\begin{proof}
    The log algebraic stack is defined by the pullback diagram
    \[
    \begin{tikzcd}
        X \times_{\af{[X/G]}} X \ar[d] \ar[r] \lpbstrict & X \times X \ar[d] \\
        \af{[X / G]} \ar[r] & \af{[X / G]} \times \af{[X/G]}
    \end{tikzcd}.
    \]
    The bottom horizontal map is representable by log algebraic spaces having $\af{[X/G]} \times \af{[X/G]} \simeq \af{[X/G] \times [X/G]}$ (see \cite[Proposition 3.3]{HABLICSEK2026127}) and thus being a map of Artin fans. So, the top horizontal map is representable by log algebraic spaces, and its target is a log scheme, therefore, its source is representable by algebraic spaces.
\end{proof}

\begin{lemma}
    The map $X \times_{\af{[X/G]}} X \to X \times X$ is log étale.
\end{lemma}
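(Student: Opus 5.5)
The map $X \times_{\af{[X/G]}} X \to X \times X$ is the base change of the map $\Delta\colon \af{[X/G]} \to \af{[X/G]} \times \af{[X/G]}$, along the strict map $X\times X \to \af{[X/G]}\times\af{[X/G]}$. That is exactly the pullback square used in the previous lemma. Log étaleness is stable under strict base change, since for a strict pullback the log cotangent complex (equivalently the log deformation problem) pulls back. It therefore suffices to show that the log diagonal of the Artin fan $\af{[X/G]}$ is log étale.

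To see this, I will use that $\af{[X/G]}$ is an Artin fan, hence log étale over $\Log$ (the stack of log structures). By \cite[Proposition 3.3]{HABLICSEK2026127}, $\af{[X/G]} \times \af{[X/G]} \simeq \af{[X/G]\times[X/G]}$, so the product is again an Artin fan and is also log étale over $\Log$. The diagonal $\Delta$ is then a morphism between two stacks log étale over $\Log$, and a morphism between log étale objects is log étale by the two-out-of-three property for log étale maps (cancellation via log cotangent complexes: $\mathbb{L}^{\log}$ of both source and target over $\Log$ vanish, so the relative one vanishes). Concretely, I would argue that log étale over $\Log$ for a log algebraic stack means étale over $\Log$ as an algebraic stack, and then $\Delta$ is a morphism of algebraic stacks étale over $\Log$. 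Hence $\Delta$ is étale relative to $\Log$, and since both source and target are strict over $\Log$ in the appropriate sense, $\Delta$ is log étale.

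The main obstacle is the middle step. The statement that the diagonal of an Artin fan is log étale requires care, because $\Delta$ is not strict: the product log structure on $\af{[X/G]}\times\af{[X/G]}$ is the sum of the two pulled-back log structures. I would handle this by working through the identification with $\af{[X/G]\times[X/G]}$, which is log étale over $\Log$ with its own log structure. I would then apply the cancellation argument to $\af{[X/G]} \to \af{[X/G]\times[X/G]}$ over $\Log$, using that the log cotangent complex of each over $\Log$ vanishes. If needed, I would also reduce étale-locally to Artin cones $[\Aff_P/\GG(P)]$. There the diagonal becomes the map $[\Aff_P/T_P]\to[\Aff_{P\oplus P}/T_P\times T_P]$ induced by the codiagonal $P\oplus P\to P$, and log étaleness can be checked directly: it is the map of toric stacks induced by a homomorphism of monoids whose groupification kernel is a torus quotient that is absorbed in the stack quotient.

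Finally, base change along the strict map $X\times X\to\af{[X/G]}\times\af{[X/G]}$ gives that $X\times_{\af{[X/G]}}X\to X\times X$ is log étale.
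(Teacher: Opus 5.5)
Your proposal is correct and follows the paper's proof: the map is the strict base change of the diagonal $\af{[X/G]}\to\af{[X/G]}\times\af{[X/G]}\simeq\af{[X/G]\times[X/G]}$, which is log \'etale as a morphism between Artin fans. Your cancellation argument (source and target are both log \'etale over $k$, so the relative log cotangent complex vanishes) is exactly the justification the paper leaves implicit. You should drop the ``Concretely'' sentence that treats $\Delta$ as a morphism over $\Log$: since $\Delta$ is not strict, it does not commute with the classifying maps to $\Log$, so that step is invalid as written, and the cancellation argument (with log cotangent complexes taken over $k$, not ``over $\Log$'') is the one that actually carries the proof.
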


\begin{proof}
    The map $X \times_{\af{[X/G]}} X \to X \times X$ is obtained by the strict pullback diagram
    \[
    \begin{tikzcd}
        X\times_{\af{[X/G]}} X \ar[d] \ar[r] \lpbstrict & X \times X \ar[d] \\
        \af{[X/G]} \ar[r] & \af{[X/G]} \times \af{[X/G]}
    \end{tikzcd}.
    \]
    Since $\af{[X/G]} \times \af{[X/G]} \simeq \af{[X/G] \times [X/G]} \simeq \af{[X \times X / G \times G]}$ (\cite{HABLICSEK2026127}) the bottom horizontal map is log étale being a map of Artin fans. Thus, the top horizontal map is a log \'etale map as well, because log étale maps are stable under log pullbacks.
\end{proof}

\begin{lemma}
    The maps $\i, \i_g : X \to X\times_{\af{[X/G]}} X$ are strict.
\end{lemma}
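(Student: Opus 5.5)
The plan is to reduce strictness of $\i$ and $\i_g$ to strictness of maps into $X\times X$, using that the projection $p\colon X\times_{\af{[X/G]}} X\to X\times X$ is strict. That projection is the top arrow of a strict (log) pullback square, as in the two preceding lemmas, so its log structure is pulled back: $M_{X\times_{\af{[X/G]}} X}=p^*M_{X\times X}$. Consequently a map $f\colon X\to X\times_{\af{[X/G]}} X$ is strict if and only if the natural map $(p\circ f)^*M_{X\times X}\to M_X$ is an isomorphism. The composite $p\circ f$ is $\Delta=(\mathrm{id},\mathrm{id})$ for $\i$ and $(\mathrm{id},g)$ for $\i_g$.

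So it suffices to treat these two composites. The structure map $\Delta^*M_{X\times X}\to M_X$ is induced by the two projections $M_X\oplus M_X\to M_X$ composed with the sum. At the level of ghost sheaves it is $\bar M_{X,x}\oplus\bar M_{X,x}\to\bar M_{X,x}$, $(a,b)\mapsto a+b$. This is surjective but not injective, so the diagonal of $X\times X$ is \emph{not} strict.

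Strictness of $\i$ therefore cannot come from $p$ alone. The real mechanism is that $X\times_{\af{[X/G]}}X$ is the log fiber product, i.e.\ the fs fiber product, not the underlying scheme product. One must verify that the fiber product over the Artin fan already identifies the two copies of the characteristic monoid along the diagonal locus. The argument is étale local, and by strictness of $\pi$ (the first lemma) and of $X\to\af{X}$ it reduces to a chart.

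Locally, $\af{X}$ and $\af{[X/G]}$ look like $[\Spec k[P]/\Spec k[P^{gp}]]$ with $P=\bar M_{X,x}$. For $\af{[X/G]}$ the relevant monoid is $P$ modulo the stabilizer action, because $\af{[X/G]}$ is the relative coarse space of $[\af{X}/G]$. The fiber product $\mathcal A_P\times_{\mathcal A_Q}\mathcal A_P$ is computed via the fs amalgam $P\oplus_Q P$, or more precisely its saturation. We then check that the pullback of this monoid along the (twisted) diagonal maps isomorphically to $P$ at each point.

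For $\i$ this amounts to showing that $\af{X}\to\af{X}\times_{\af{[X/G]}}\af{X}$ is strict. I expect this to follow from the local description: along the image of the diagonal, the fiber product of Artin cones over $\mathcal A_Q$ has characteristic monoid $P$, coming from the component where both factors agree. For $\i_g$ I would use that $(\mathrm{id}\times g)$ is an automorphism of $X\times_{\af{[X/G]}} X$. It is well defined because $g$ acts trivially on $\af{[X/G]}$, and it is an isomorphism of log stacks, hence strict. Then $\i_g=(\mathrm{id}\times g)\circ \i$ is a composition of strict maps, so strictness of $\i_g$ reduces to that of $\i$.

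The main obstacle is the local monoid computation for $\i$, i.e.\ showing that the log fiber product over the Artin fan does not introduce extra ghost monoid along the diagonal. The first approach I would try is the one used in \cite{HABLICSEK2026127} for the log diagonal $X\to X\times_{\af{X}}X$. That map is strict because $X\to\af{X}$ is strict and Artin fans have étale, hence representable-by-monomorphism-on-cones, diagonals. I would then compare $X\times_{\af{X}}X\to X\times_{\af{[X/G]}}X$. This map is log étale, and its restriction near the diagonal is strict, being a base change of the strict map $\af{X}\to\af{[X/G]}$ along $G$-orbits. Strictness of $\i$ then follows by composing.
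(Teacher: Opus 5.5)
There is a genuine gap, and it starts in your first paragraph: the projection $p\colon Z:=X\times_{\af{[X/G]}}X\to X\times X$ is \emph{not} strict. In the strict pullback square, the strict arrows are the vertical ones. The right one, $X\times X\to\af{[X/G]}\times\af{[X/G]}$, is a product of strict maps, and $h\colon Z\to\af{[X/G]}$ is its base change. By contrast, $p$ is the base change of the diagonal of $\af{[X/G]}$, whose ghost map is the sum $\bar M\oplus\bar M\to\bar M$, the same non-injective map you compute for $\Delta$. Your own first two paragraphs expose this: if $p$ were strict, your ``if and only if'' plus the non-strictness of $\Delta=p\circ\i$ would prove that $\i$ is \emph{not} strict, i.e.\ that the lemma is false. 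You sense the tension but misdiagnose it. Since one leg of the fiber product is strict, the fs fiber product has the ordinary fiber product as underlying stack. Since $X\to\af{[X/G]}$ is strict, the characteristic monoid of $\af{[X/G]}$ at the image of $x$ is $P=\bar M_{X,x}$ itself, not a quotient by the stabilizer. So there is no nontrivial amalgam $P\oplus_Q P$ to saturate. The central step, strictness of $\i$, is then left at ``I expect this to follow''.

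The missing idea is cancellation along the correct strict arrow, which is the entire proof in the paper. Since $\mathrm{pr}_1\circ\i_g=\mathrm{id}_X$, the composite $h\circ\i_g$ is the structure map $X\to\af{X}\to\af{[X/G]}$, which is strict. If $h$ and $h\circ f$ are strict, then $f^*M_Z\cong f^*h^*M_{\af{[X/G]}}\cong(h\circ f)^*M_{\af{[X/G]}}\cong M_X$, so $f$ is strict. This handles $\i$ and every $\i_g$ at once, so your (valid) reduction $\i_g=(\mathrm{id}\times g)\circ\i$ is unnecessary. Your fallback in the last paragraph can be completed, but only through this same argument. The map $X\times_{\af{X}}X\to Z$ is strict everywhere because both sides are strict over $\af{[X/G]}$, not via a base change ``along $G$-orbits''. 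Likewise, $X\to X\times_{\af{X}}X$ is strict by cancellation over $\af{X}$, not because Artin fans have \'etale diagonals: the diagonal of $[\Aff^1/\GG_m]$ over $k$ has $\GG_m$ fibers and is not \'etale.
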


\begin{proof}
    The maps $\i, \i_{g}$ are defined by the diagram
    \[
    \begin{tikzcd}
        X \ar[r, "\i\mathrm{,}\i_g"'] \ar[dr] \ar[rr, bend left = 20, "\Delta \mathrm{,} \Delta_g"] & X \times_{\af{[X/G]}} X \ar[d] \ar[r] \lpbstrict & X \times X \ar[d] \\
        & \af{[X/G]} \ar[r] & \af{[X/G]} \times \af{[X/G]}
    \end{tikzcd}.
    \]
    The right-most vertical map is strict being given by two copies of the composite of strict maps $X \to \af{X} \to \af{[X/G]}$. Hence, its pullback $X \times_{[X/G]} X \to \af{[X/G]}$ is also strict. As before, the map $X \to \af{[X/G]}$ is strict. Therefore, $\i,\i_g$ are as well.
\end{proof}

\begin{lemma}
    If $X$ is log smooth, then so is $X \times_{\af{[X/G]}} X$.
\end{lemma}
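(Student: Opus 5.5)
The plan is to combine the preceding lemma, which shows that $X \times_{\af{[X/G]}} X \to X \times X$ is log étale, with log smoothness of the product $X \times X$. Log smoothness is stable under composition, so it suffices to check two things: that $X \times X$ is log smooth over $k$, and that the composite $X \times_{\af{[X/G]}} X \to X\times X \to \Spec k$ is a composite of log smooth maps. Here $\Spec k$ carries the trivial log structure.

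First, $X \times X$ is log smooth. The projection $X\times X \to X$ is the base change of the log smooth map $X \to \Spec k$ along $X \to \Spec k$. In the category of fs log schemes, log smooth morphisms are stable under (fs) base change. Moreover, since $\Spec k$ has trivial log structure, the fs fiber product agrees with the ordinary fiber product with the product log structure. Composing the projection with the log smooth structure map $X \to \Spec k$ shows $X\times X$ is log smooth over $k$.

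Second, log étale morphisms are in particular log smooth, so the previous lemma gives that $X \times_{\af{[X/G]}} X \to X \times X$ is log smooth. The composite with $X\times X \to \Spec k$ is then log smooth. By the earlier representability lemma, the source is a log algebraic space, so the notion of log smoothness used here is the usual one for log algebraic spaces.

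The main subtlety is that the fiber products here are taken in the fs category, and the pullback in the previous lemma is a log (fs) pullback rather than an ordinary one. I would address this by noting that the previous lemma's diagram is already an fs pullback, so its conclusion applies directly. I would also note that stability of log étaleness under fs base change is exactly what was invoked there, so no further check of saturation is needed. If one wants an argument independent of that lemma, an alternative is available. The structure map $X \times_{\af{[X/G]}} X \to \af{[X/G]}$ is strict, being a pullback of the strict map $X\times X \to \af{[X/G]}\times\af{[X/G]}$ as in the proof of strictness of $\i,\i_g$. One could then check smoothness of the underlying map of this strict morphism, since the Artin fan is log étale over $\Log$. However, I would present the first argument.
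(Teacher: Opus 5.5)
Your argument is correct, but it takes a different route from the paper. The paper does not use the preceding log-étaleness lemma. It first observes that $X\times_{\Log}X$ is log smooth: by Olsson's criterion $X\to\Log$ is smooth, so the projection $X\times_\Log X\to X$ is strict and smooth. It then notes that the map $X\times_{\af{[X/G]}}X\to X\times_\Log X$ induced by $\af{[X/G]}\to\Log$ is an open immersion, being a base change of the diagonal of the étale, representable map $\af{[X/G]}\to\Log$. An open immersion is strict and smooth, so log smoothness transfers.

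Your route instead composes the log étale map $X\times_{\af{[X/G]}}X\to X\times X$ with the log smooth structure map of $X\times X$. It uses only the formal stability of log smooth and log étale morphisms under fs base change and composition, and never needs Olsson's stack or the characterization of log smoothness via $\Log$. The cost is that it depends on the previous lemma, i.e.\ on log étaleness of the diagonal of the Artin fan.

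The paper's argument gives somewhat more structural information. It exhibits $X\times_{\af{[X/G]}}X$ as an open substack of $X\times_\Log X$, strict and smooth over $X$ via either projection. The paper reuses this same open immersion into $X\times_\Log X$ in the next lemma, to deduce properness of $\i,\i_g$ from log separatedness. Your sketched alternative, via the strict map to $\af{[X/G]}$ and smoothness of its underlying map, is closer in spirit to the paper's proof.
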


\begin{proof}
    If $X$ is log smooth, then so is $X\times_{\Log}X$. The smoothness of $X \times_{\af{[X/G]}} X$ follows because the natural map $X\times_{\af{[X/G]}}X\to X\times_{\Log}X$ induced by $\af{[X/G]}\to \Log$ is an open immersion, hence smooth (\cite{HABLICSEK2026127}).
\end{proof}

\begin{lemma}
    If $X$ is log smooth and log separated, then the diagonal and twisted diagonal maps $\i, \i_g : X \to X \times_{\af{[X/G]}} X$ are l.c.i. closed immersions.
\end{lemma}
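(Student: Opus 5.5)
Let $Y := X \times_{\af{[X/G]}} X$. The plan is to show that $\i$ and $\i_g$ are closed immersions, and then that they are regular embeddings, because both source and target are smooth over $k$ in a suitable sense.

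\textbf{Reduction to $\i$.} First reduce everything to the untwisted diagonal $\i$. The automorphism $\mathrm{id}\times g$ of $X\times X$ lies over the automorphism of $\af{[X/G]}\times\af{[X/G]}$ induced by $g$. This is the identity, since $G$ acts trivially on $\af{[X/G]}$. Hence $\mathrm{id}\times g$ induces an automorphism $\phi_g$ of $Y$ with $\i_g=\phi_g\circ \i$. Being a closed immersion and being l.c.i. are both preserved by composing with an isomorphism, so it suffices to treat $\i$.

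\textbf{Closed immersion.} Factor $\i$ through the log diagonal $X\to X\times_{\af{X}}X$ followed by the natural map $X\times_{\af{X}}X\to Y$. I would instead argue directly, as follows.
\begin{itemize}
\item $\i$ is a section of the projection $p_1:Y\to X$.
\item A section of a separated morphism is a closed immersion.
\item So it suffices that $p_1:Y\to X$ is separated.
\end{itemize}
Now $p_1$ is the base change of $X\to\af{[X/G]}$ along $X\to\af{[X/G]}$. Factor $X\to\af{[X/G]}$ as $X\to\af{X}\to\af{[X/G]}$.
\begin{itemize}
\item The first map is separated by the log separatedness assumption.
\item The second map is separated because $\af{[X/G]}$ is the relative coarse space of $[\af{X}/G]$ over $\Log$, and $\af{X}\to[\af{X}/G]\to\af{[X/G]}$ is finite.
\end{itemize}
This gives a monomorphism that is proper, hence a closed immersion. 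I expect this step to be the main obstacle: one must justify that the map $\af{X}\to \af{[X/G]}$ of Artin fans is separated (indeed finite). If this is awkward, the fallback is the weak log separatedness of $[X/G]$ inherited from $X$, together with strictness of $\i$ (the previous lemma).

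\textbf{l.c.i.} By the previous lemmas, $Y$ is log smooth and $\i$ is strict. The log structures on $X$ and on $Y$ are both pulled back from $\af{[X/G]}$, and $\i$ commutes with the maps to $\af{[X/G]}$. So $\i$ is a morphism of smooth $\af{[X/G]}$-stacks: $X\to\af{[X/G]}$ is smooth because $X$ is log smooth and strict over its Artin fan, and $Y\to \af{[X/G]}$ is smooth for the same reason.

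A closed immersion between schemes (or algebraic spaces) smooth over a common base is a regular immersion, by the standard criterion EGA IV 17.12.1. Verify this smooth-locally on $\af{[X/G]}$ by choosing a smooth atlas, which preserves both smoothness and the closed immersion property. Hence $\i$ is an l.c.i. closed immersion, with conormal bundle $\Omega^{\log}_X$. Transporting along $\phi_g$ gives the same conclusion for $\i_g$.
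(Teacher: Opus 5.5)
The reduction to $\i$ via the automorphism $\phi_g$ is fine. Your l.c.i.\ step is also correct: an immersion between algebraic spaces smooth over the common base $\af{[X/G]}$ is regular (EGA IV 17.12.1, checked on a smooth atlas). That route is different from, and cleaner than, the paper's cotangent-complex argument.

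The gap is in the closed-immersion step, exactly where you suspected. The map $\af{X}\to\af{[X/G]}$ need \emph{not} be separated, even when $X$ is log smooth and log separated. The relative inertia of $[\af{X}/G]\to\Log$ can fail to be finite, so the map to the ``coarse space'' need not be proper.

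Here is a concrete counterexample. Take $X=\mathbb{P}^1$ with the log structure of $D=\{0,\infty\}$, and let $G=\mathbb{Z}/2$ act by $z\mapsto z^{-1}$.
\begin{itemize}
\item $X$ is log smooth and log separated: it maps to the open substack $[\Aff^1/\GG_m]\subset\Log$ via $(\OO(D),s_D)$, and this map is separated because $X$ is separated and $[\Aff^1/\GG_m]$ has affine diagonal.
\item $\af{X}=[\mathbb{P}^1/\GG_m]$, which has two closed points over the rank-one stratum.
\item $\af{[X/G]}=[\Aff^1/\GG_m]$, because $G$ identifies $0$ and $\infty$.
\end{itemize}
The map $\af{X}\to\af{[X/G]}$ is therefore \'etale, an isomorphism over the open point, and two-to-one over the closed point. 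It is the stacky line with doubled origin mapping to the line, so it is not separated, let alone finite. Correspondingly, the generic $\mathbb{Z}/2$-stabilizer of $[\af{X}/G]$ over $\Log$ disappears at the closed points.

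Your fallback does not repair this as stated, because weak log separatedness is not inherited by $[X/G]$ from $X$. Take $X=\Aff^2$ with its toric log structure and $S_2$ swapping the coordinates. Then $X\to\af{X}$ is separated, but $X\to\af{[X/G]}$ is not, and $\i$ is not closed (compare the paper's remark on $X^n\to\af{[X^n/S_n]}$).

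What you need is the strong form of log separatedness, compared against $\Log$ rather than $\af{X}$; this is the paper's argument.
\begin{itemize}
\item Log separatedness says $X\to X\times_{\Log}X$ is proper.
\item Since $\af{[X/G]}\to\Log$ is \'etale and representable, its diagonal is an open immersion. Hence the base change $Y\to X\times_{\Log}X$ is an open immersion.
\item Therefore $\i$, and $\i_g=\phi_g\circ\i$, are proper. Equivalently, $X\to\af{[X/G]}$, and hence $p_1$, is separated.
\end{itemize}
A proper section is a proper monomorphism, hence a closed immersion. With this replacement, the rest of your proof goes through.
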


\begin{proof}
    The maps $\i_g$ induce distinguished triangles
    \[\i_g^* \mathbb L^{\log}_{X\times_{\af{[X / G]}} X} \to \mathbb L^{\log}_{X} \to \mathbb L^{\log}_{\i_g} \xrightarrow{+1}.\]
    As the maps $\i_g$ are strict, $\mathbb L ^{\log}_{\i_g} \simeq \mathbb L_{\i_g}$. Since the source and target are log smooth, their cotangent complex is concentrated at most in degree 0. Furthermore, the maps $\i_g:X\to X\times_{\af{[X/G]}}X$ split via the projection to the first component $X\times_{\af{[X/G]}}X\to X$, implying that $\mathbb L_{\i_g}$ is concentrated in degree $-1$. Thus, the maps $\i_g$ are l.c.i.\ morphisms.
    
    To show that the maps are closed immersions when $X$ is log separated we look at the diagram
    \[
    \begin{tikzcd}
        X \ar[r] & X \times_{\af{[X / G]}} X \ar[r] \ar[d] \lpbstrict & X \times_\Log X \ar[d] \\
        & \af{[X / G]} \ar[r] &\af{[X / G]} \times_\Log \af{[X / G]}.
    \end{tikzcd}
    \]
    The composite of the two top horizontal maps $X\to X\times_{\Log}X$ is proper by definition since $X$ is log separated. Since $\af{[X / G]} \to \Log$ is strict, étale and representable by algebraic spaces, its diagonal is an open immersion. Thus, also $\i$ and $\i_g$ are proper maps implying that the maps $\i_g$ are l.c.i closed immersions.
\end{proof}

\begin{remark}
    When $X$ is not log separated, the diagonal and twisted diagonal maps are locally l.c.i closed immersions meaning that they are l.c.i closed immersions into an open subset of $X\times_{\af{[X/G]}}X$.
\end{remark}

Now, we turn our attention to understanding the intersections of the twisted diagonal maps $\i_g$.

\begin{definition}\label{def:fixedlocus}
    We define the \textit{log fixed locus}, $X_{\log}^g$, as the (underived) intersection of the diagonal map $\i$ and the twisted diagonal map $\i_g$.
    \[
    \begin{tikzcd}
        X^g_{\log}\ar[r, "p"]\ar[d, "q"] & X\ar[d, "\i"]\\ X\ar[r, "\i_g"] & X \times_{\af{[X/G]}} X
    \end{tikzcd}
    \]
\end{definition}

Definition \ref{def:fixedlocus} is the extension of the definition used in \cite{HABLICSEK2026127} for the log fixed point locus of a firm action. In fact, an action is defined to be \textit{firm} if the action of $G$  on the Artin fan $\af{X}$ is trivial. In that case, $\af{[X/G]}=\af{X}$, and the definition in \cite{HABLICSEK2026127} agrees with the definition above.

\begin{remark}
    We remark that the definition above does not depend on the Artin fan we chose. Let us consider an Artin fan $\scr B$ (with a trivial $G$-action) given by a strict, necessarily \'etale map $\af{[X/G]}\to\scr B$, and consider the product $X\times_{\scr B}X$ and the corresponding twisted diagonal maps $j_g:X\to X\times_{\scr B}X$. We could define the log fixed locus $X^g_{\log}$ as the intersection of $j:X\to X\times_{\scr B}X$ and the twisted diagonal $j_g:X\to X\times_{\scr B}X$. Since the map 
    $X\times_{\af{[X/G]}}X\to X\times_{\scr B}X$ induced by the strict \'etale map $\af{[X/G]}\to \scr B$ is an open immersion, we see that the log fixed locus remains unchanged. 
\end{remark}

\begin{definition}
    Given the above intersection diagram, we denote by $E$ the associated excess bundle of $q$ defined by the exact sequence
    \begin{equation}\label{eqn:excessbundledef}
        0 \to E^\vee \to N^\vee_{\tilde i}|_{X_{\log}^g} \to N^\vee_{q} \to 0.
    \end{equation}
\end{definition}

We have the following statement about the excess bundle.

\begin{lemma}\label{lem:logfixedlocussmooth}
    The log fixed point locus is smooth. Furthermore, we have isomorphisms $E^\vee\cong (\Omega^{1, \log}_X)_g|_{X_{\log}^g}\cong\Omega^{1, \log}_{X^g_{\log}}$.
\end{lemma}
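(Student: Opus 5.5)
The strategy is to reduce to the classical (non-log) situation of a finite group acting on a smooth variety, by replacing $X$ with the smooth space $Y := X \times_{\af{[X/G]}} X$ and working étale locally. By the lemmas above, $Y$ is log smooth and $\i, \i_g : X \to Y$ are strict l.c.i.\ closed immersions, locally at least. Since the maps are strict, $X^g_{\log}$ is the fibre product $X \times_Y X$ computed on underlying algebraic spaces, and it carries the pulled-back log structure.

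\textbf{Step 1: local model.} Work étale locally near a point $x \in X^g_{\log}$. Note that $g$ need not fix $x$ in $X$; rather, $(x,x)$ and $(x,gx)$ coincide in $Y$. Use the projection $\mathrm{pr}_1 : Y \to X$, which splits both $\i$ and $\i_g$. I would also use the log étale map $Y \to X \times X$. Near $(x,x)$, $Y$ is identified étale locally with a "log neighbourhood" of the diagonal whose relative log tangent space along $\i$ is $T^{\log}_X$. The key step is to linearize: choose étale/formal coordinates near the diagonal so that the twisted diagonal $\i_g$ becomes the graph of an automorphism $\phi_g$ of a formal neighbourhood that acts linearly on $T^{\log}_{X,x}$. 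For the linearization I would average over the cyclic group $\langle g \rangle$, as in the classical Cartan linearization argument in characteristic $0$. Here $\phi_g$ acts on $T^{\log}_X$ through its action on $\Omega^{1,\log}_X$, which is well defined because $G$ preserves $M_X$.

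\textbf{Step 2: smoothness.} In the linear model, the intersection of the diagonal with the graph of a finite-order linear automorphism is the fixed linear subspace, and it is therefore smooth. Concretely, $X^g_{\log}$ is smooth over the log stratum with tangent space $(T^{\log}_X)^g$, so it is log smooth. With the induced strict log structure, its log cotangent sheaf is the coinvariants $\Omega^{1,\log}_{X^g_{\log}} \cong (\Omega^{1,\log}_X)_g|_{X^g_{\log}}$. This gives the second isomorphism. Here I interpret "smooth" as log smooth with its strict log structure; the identification $\Omega^{1,\log}_{X^g_{\log}} = (\Omega^{1,\log}_X)_g|$ comes from the conormal sequence of the strict immersion $p$, whose conormal sheaf is the image of $(1-g)$ on $\Omega^{1,\log}_X$. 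In characteristic $0$ this image splits off the coinvariants.

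\textbf{Step 3: excess bundle.} We have $N^\vee_{\i} \cong \Omega^{1,\log}_X$, from the triangle in the l.c.i.\ lemma, using the splitting by $\mathrm{pr}_1$. Restricted to $X^g_{\log}$, the map $N^\vee_{\i}|_{X^g_{\log}} \to N^\vee_q$ becomes, in the local model, the map $\Omega^{1,\log}_X|_{X^g_{\log}} \to \operatorname{im}(1-g)$ induced by $1-g$; here $N^\vee_q$ is the conormal sheaf of $q$, identified with the conormal of $p$ via the symmetric roles of the two maps. Its kernel is the $g$-invariants. The invariants map isomorphically to the coinvariants by averaging, since $g$ has finite order and we are in characteristic $0$. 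Hence $E^\vee \cong (\Omega^{1,\log}_X)_g|_{X^g_{\log}}$, and combining with Step 2 finishes the proof. These identifications are canonical, so the local computations glue.

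\textbf{Main obstacle.} The hardest step is Step 1: linearizing $\i_g$ inside $Y$ when $g$ moves points of $X$, in particular when $g$ permutes components of the log boundary while acting trivially on $\af{[X/G]}$. I would try first to pass to the formal completion of $Y$ along $\i(X)$, and use that this completion is the log formal neighbourhood of the diagonal, modeled on $\operatorname{Spec}$ of the completed symmetric algebra on $\Omega^{1,\log}_X$. Since $Y \to X \times X$ is log étale, there the logarithmic coordinates $\log(m/ g^*m)$ for $m \in M_X$ behave like ordinary coordinates, and averaging makes $\phi_g$ linear.
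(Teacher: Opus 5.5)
\textbf{There is a genuine gap, at exactly the step you flag.}

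Your Steps 2 and 3 are the paper's argument. You identify $N^\vee_{\i}$ with $\Omega^{1,\log}_X$; the paper does this via strictness, the open immersion $X\times_{\af{X}}X\subseteq X\times_{\af{[X/G]}}X$ and \cite[Lemma 4.5]{HABLICSEK2026127}, and your route via the cotangent triangle and the splitting by $\mathrm{pr}_1$ works too. You then observe that $N^\vee_{\i}|_{X^g_{\log}}\to N^\vee_q\to\Omega^{1,\log}_X|_{X^g_{\log}}$ is $1-g^*$ and read off the excess and conormal sequences. Your bookkeeping, with $N^\vee_q$ the image of $1-g^*$, is the right one.

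The coinvariant description of $\Omega^{1,\log}_{X^g_{\log}}$ only uses right exactness of the conormal sequence. Two other things do not follow from it:
\begin{enumerate}
\item Identifying $E^\vee$ with the invariants needs $N^\vee_q\to\Omega^{1,\log}_X|_{X^g_{\log}}$ to be injective. Without that you only get $E^\vee\subseteq(\Omega^{1,\log}_X|_{X^g_{\log}})^g$.
\item The smoothness of $X^g_{\log}$ cannot be read off from tangent spaces at all.
\end{enumerate}
In your proposal both rest on Step 1, which is left as a plan.

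That plan also rests on a false premise. Since $Y\to X\times X$ is a morphism, $\i(x)=\i_g(x)$ forces $gx=x$. Indeed $p=q$ (apply $\mathrm{pr}_1$), and $g\circ p=p$ as log maps (apply $\mathrm{pr}_2$). What actually separates $X^g_{\log}$ from the ordinary fixed locus is that $g$ may fix $x$ yet act nontrivially on the log point $M_X|_{\bar x}$.

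For example, take $z\mapsto -z$ on $\Aff^1$ with log structure at $0$; this is the case $n=2$ in the proof of Theorem~\ref{thm:rootstack}. The ordinary fixed locus is $\{0\}$. But in $Y=\{(z,z',u):z=uz'\}$ one has $\i(0)=(0,0,1)\neq(0,0,-1)=\i_g(0)$, so $X^g_{\log}=\emptyset$. So there is no ``$g$ moving points'' to linearize. Also, $Y$ is only log smooth, so the classical Cartan argument does not apply to it directly.

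\textbf{How to close the gap.} Average deformations rather than coordinates.
\begin{enumerate}
\item Let $T_0\subseteq T$ be a strict square-zero thickening with ideal $J$, and let $f_0\colon T_0\to X^g_{\log}$.
\item Since $g\circ p\circ f_0=p\circ f_0$, $g$ acts on the set of lifts $f\colon T\to X$ of $p\circ f_0$. This set is nonempty after localizing on $T$, as $X$ is log smooth. The action is affine over the linear action of $g$ on $\mathrm{Hom}((p f_0)^*\Omega^{1,\log}_X,J)$.
\item As $g$ has finite order and $\mathrm{char}\,k=0$, averaging produces a lift with $g\circ f=f$.
\item Then $\i\circ f$ and $\i_g\circ f$ restrict to the same map $T_0\to Y$ and have the same image $(f,f)$ in $X\times X$. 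They coincide because $Y\to X\times X$ is log \'etale, so $f$ factors through $X^g_{\log}$.
\end{enumerate}
Thus $X^g_{\log}$ is log smooth with the log structure pulled back from $X$. The same argument with $T=T_0[\epsilon]$ gives $T^{\log}_{X^g_{\log}}=(T^{\log}_X|_{X^g_{\log}})^g$, and your Steps 2 and 3 then go through.

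Your $\log(m/g^*m)$ idea can also be made to work, but at a point $\bar x\in X^g_{\log}$ itself. There, for a chart $P\to M_X$, one has $g^*e_p=u_pe_p$ with $u_p(\bar x)=1$. Averaging $\log u_p$ gives a formally $g$-invariant chart. In that chart $X^g_{\log}$ is the ordinary fixed locus of $g$ acting on $X$ over $\Spec k[P]$, which is smooth over $\Spec k[P]$ by the relative Cartan argument.

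For comparison, the paper does not linearize at all. It deduces log smoothness directly from the first-order identification $(T^{\log}_X)^g\cong T^{\log}_{X^g_{\log}}$. Your sense that an extra argument is needed is sound, but the proposal does not yet supply one.
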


\begin{proof}
    Consider the exact sequence \eqref{eqn:excessbundledef}. Since the map $\i: X\to X\times_{\af{[X/G]}}X$ is strict, we have that 
    \[N^\vee_{X / X \times_{\af{[X/G]}} X}=N^{\log, \vee}_{X/X\times_{\af{[X/G]}}X}.\]
    Since the map $X\times_{\af{X}}X\to X\times_{\af{[X/G]}}X$ is an open immersion, we get that 
    \[N^{\log, \vee}_{X/X\times_{\af{[X/G]}}X}=N^{\log, \vee}_{X/X\times_{\af{X}}X}\]
    This latter sheaf can be identified with $\Omega^{1,\log}_X$ by \cite[Lemma 4.5]{HABLICSEK2026127}.
    
    As $X^g_{\log}$ is the intersection of the diagonal and twisted diagonal maps, $N^{\vee}_{X^g_{\log}/X}$ is given by the cokernel of $id-g^*:\Omega^{1, \log}_{X}\to \Omega^{1, \log}_{X}$ on $X^g_{\log}$. The kernel of this map is precisely $(\Omega^{1,\log}_X)_g$, so we obtain a short exact sequence
    \[0\to (\Omega^{1,\log}_X)_g\to \Omega^{1,\log}_X|_{X^g_{\log}}\to N^{\vee}_{X^g_{\log}/X}\to 0.\]
    
     This sequence can be identified with the excess short exact sequence and the logarithmic conormal short exact sequence. This shows that $E^\vee\cong (\Omega^{1, \log}_X)_g|_{X_{\log}^g}\cong\Omega^{1,\log}_{X^g_{\log}}$. From this, we get that $(T_X^{\log})^g\cong T^{\log}_{X^{g}_{\log}}$ implying that $X^g_{\log}$ is log smooth.
\end{proof}

We can now compute the endofunctors $\i_{g}^* \i_*$ and $\i_{g}^! \i_*$ using \cite[Theorem 5.11]{HABLICSEK2026127} (which is a generalization of  \cite[Proposition 3.3]{arinkin2019orbifold}, or see \cite{grivaux2014hochschild} ). There are three assumptions that are needed to be satisfied: a) the excess bundle is locally free, b) the map $X\to X\times_{\af{[X/G]}}X$ splits to first order, c) the short exact sequence
\[0 \to E^\vee \to N^\vee_{X / X \times_{\af{[X/G]}} X}|_{X_{\log}^g} \to N^\vee_{X_{\log}^g/X} \to 0\]
splits. 

In our case, all 3 assumptions are satisfied: a) and c) from Lemma \ref{lem:logfixedlocussmooth} and $X\to X\times_{\af{[X/G]}}X$ is split via any of the two projections $X\times_{\af{[X/G]}}X\to X\times X\to X$. As a result, we obtain an isomorphism of dg endofunctors $\D(X)\to \D(X)$:
\[\i_g^* \i_* (-) \simeq q_* (p^* (-) \otimes \Sym(\Omega^1_{X^g_{\log}} [1])) \qquad \i_g^! \i_* (-) \simeq q_* (p^! (-) \otimes \Sym(T_{X^g_{\log}} [-1])).\]

As a corollary, we obtain Theorem \ref{thm:main}. The proof of this theorem is completely parallel to \cite{arinkin2019orbifold} and \cite{HABLICSEK2026127}.

\begin{corollary}[Orbifold HKR]
    We have isomorphisms of $k$-vector spaces
    \[R^n\Gamma(\HHl{[X/G]}(\OO_{[X/G]}))=\left(\bigoplus_{g\in G} R^n\Gamma(\HHl{X^g_{\log}}(\OO_{X^g_{\log}}))\right)^G=\]
    \[=\left(\bigoplus_{g\in G} \bigoplus_{q-p=n}H^p(X^g_{\log}, \Omega^{q,\log}_{X^g_{\log}})\right)^G.\]
\end{corollary}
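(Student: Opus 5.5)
The plan is to follow the pattern of \cite{arinkin2019orbifold}: compute the hypercohomology of $\iota^*\iota_*\OO_{[X/G]}$ by pulling back along the strict étale cover $\pi:X\to[X/G]$ and taking $G$-invariants.

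\textbf{Step 1: reduce to $X$.} Quasicoherent sheaves on $[X/G]$ are $G$-equivariant sheaves on $X$. Since $G$ is finite and $k$ has characteristic $0$, taking invariants is exact, so
\[R\Gamma([X/G],\mathcal F)\simeq R\Gamma(X,\pi^*\mathcal F)^G.\]
By the presentation $[X/G]\times_{\af{[X/G]}}[X/G]\simeq[X\times_{\af{[X/G]}}X/G\times G]$, the pullback $\pi^*\iota^*\iota_*\OO_{[X/G]}$ is the $G$-equivariant object $\bigoplus_{g\in G}\i_g^*\i_*\OO_X$. The group $G$ acts on it by $h\cdot$ sending the $g$-summand to the $hgh^{-1}$-summand. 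I would verify this by base change along the étale cover $X\times_{\af{[X/G]}}X\to[X\times_{\af{[X/G]}}X/G\times G]$. The relevant preimage of the diagonal orbit is $\coprod_{g}\i_g(X)$, which is a disjoint union of images up to the usual identification of the $G\times G$ orbit.

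\textbf{Step 2: apply the HKR result to each summand.} The isomorphism established just before the corollary (from \cite[Theorem 5.11]{HABLICSEK2026127}, whose three hypotheses were checked above) gives
\[\i_g^*\i_*\OO_X\simeq q_*\bigl(\OO_{X^g_{\log}}\otimes\Sym(\Omega^{1,\log}_{X^g_{\log}}[1])\bigr)=q_*\bigoplus_{q'}\Omega^{q',\log}_{X^g_{\log}}[q'].\]
Since $q$ is a closed immersion, $q_*$ is exact, and $R\Gamma(X,q_*-)=R\Gamma(X^g_{\log},-)$. Taking $R^n\Gamma$ then yields $\bigoplus_{q-p=n}H^p(X^g_{\log},\Omega^{q,\log}_{X^g_{\log}})$. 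On the other hand, $X^g_{\log}$ is log smooth by Lemma \ref{lem:logfixedlocussmooth}, so the HKR theorem of \cite{HABLICSEK2026127} identifies this same sum with $R^n\Gamma(\HHl{X^g_{\log}}(\OO_{X^g_{\log}}))$. This gives the middle expression in the statement.

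\textbf{Step 3: equivariance, the main obstacle.} It remains to check that the isomorphisms in Step 2 assemble $G$-equivariantly across the summands, so that taking $G$-invariants is legitimate. The point is that $h\in G$ maps $X^g_{\log}$ isomorphically to $X^{hgh^{-1}}_{\log}$ and intertwines $\i_g$ with $\i_{hgh^{-1}}$. The construction in \cite[Theorem 5.11]{HABLICSEK2026127} depends on a choice of first-order splitting and of a splitting of the excess sequence, so the HKR isomorphism is only natural with respect to these choices.

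To handle this I would make the choices $G$-invariantly.
\begin{itemize}
\item Use the first projection $X\times_{\af{[X/G]}}X\to X$ as the splitting of every $\i_g$; this is compatible with the $G$-action.
\item Split the sequence $0\to(\Omega^{1,\log}_X)_g\to\Omega^{1,\log}_X|_{X^g_{\log}}\to N^\vee\to0$ by the canonical eigenspace decomposition under the finite-order automorphism $g$ (available because the characteristic is $0$). This splitting is automatically equivariant for the centralizer and transported correctly under conjugation.
\end{itemize}
With these choices, naturality of the HKR isomorphism gives the required equivariance, and combining Steps 1–3 yields the claimed isomorphisms.
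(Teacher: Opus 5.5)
Your proposal is correct and takes the paper's route. The paper decomposes $\iota^*\iota_*\simeq\bigoplus_{g\in G}\i_g^*\i_*$, applies \cite[Theorem 5.11]{HABLICSEK2026127} to each summand, takes $G$-invariants, and calls the rest ``completely parallel'' to \cite{arinkin2019orbifold}; your Step 3 (choosing the projection splitting and the eigenspace splitting of the excess sequence $G$-equivariantly) spells out what the paper leaves implicit.

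Two cosmetic points. First, $R^n\Gamma$ of $\bigoplus_{q'}\Omega^{q',\log}_{X^g_{\log}}[q']$ is literally $\bigoplus_{p-q'=n}H^p(X^g_{\log},\Omega^{q',\log}_{X^g_{\log}})$, so the indexing $q-p=n$ you inherited from the statement matches the homological convention $R^{-n}\Gamma$. Second, you do not need $q$ to be a closed immersion, which can fail without log separatedness (cf.\ the symmetric product example), since $R\Gamma(X,Rq_*(-))\simeq R\Gamma(X^g_{\log},-)$ holds anyway.
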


\section{Applications}

In this section, we apply the decomposition theorem on log Hochschild homology in two cases: a) symmetric products and b) root stacks.

\subsection{Symmetric product}

Let $X$ be a log smooth variety (for instance, we consider a smooth variety $X$ with log structure coming from a simple normal crossing divisor). We can consider two symmetric products: the symmetric product $[X^n/S_n]$, i.e the permutation action of $S_n$ on $X^n$, or the symmetric product $[\widehat{X^n}/S_n]$ where $\widehat{X^n}=X\times_{\af{X}}X\times_{\af{X}}...\times_{\af{X}}X$ is the $n$-th power of $X$ over its Artin fan. We see that, indeed, $S_n$ respects the natural log structure on $X^n$ and $\widehat{X^n}$ and that both $X^n$ and $\widehat{X^n}$ are log smooth. 

We begin by analyzing the easier case, namely the symmetric product $[\widehat{X^n}/S_n]$.

\subsubsection{Symmetric product $[\widehat{X^n}/S_n]$}

\begin{lemma}
    The Artin fan of $\widehat{X^n}$ is $\af{X}$. The action of $S_n$ is firm, and thus the Artin fan of $[\widehat{X^n}/S_n]$ agrees with $\af{X}$ as well. 
\end{lemma}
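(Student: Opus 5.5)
The plan is to exhibit a strict map $\widehat{X^n}\to\af{X}$ with the required properties and then use the universal characterization of the Artin fan. By definition $\widehat{X^n}$ is the iterated fiber product $X\times_{\af{X}}\cdots\times_{\af{X}}X$, so it comes with a structure map $\widehat{X^n}\to\af{X}$. This map is strict: each factor $X\to\af{X}$ is strict, and strict maps are stable under base change. It is also smooth, since it is a base change of the smooth map $X\to\af{X}$ composed finitely many times; for instance $X\times_{\af{X}}X\to X\to\af{X}$, and then induct on $n$.

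The connectedness of fibers also follows by induction. The fiber of $\widehat{X^n}\to\af{X}$ over a point of $\af{X}$ is the product of the fibers of $X\to\af{X}$. Since $\af{X}$ is the Artin fan of $X$, each of these fibers is connected (and nonempty over the image). A product of connected varieties over an algebraically closed field is connected, so the fibers of $\widehat{X^n}\to\af{X}$ are connected, with the same image as $X\to\af{X}$.

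Next I would check that $\af{X}$ is the initial Artin fan receiving a strict map from $\widehat{X^n}$. By the construction of Artin fans via the strict smooth connected-fiber factorization (as used in \cite{HABLICSEK2026127}, cf.\ its Proposition 3.3 computing $\af{Y\times Y'}$), a strict smooth map to an Artin fan that has connected fibers and is surjective onto an open substack identifies the target with the Artin fan. A different route, which I would use if that characterization is awkward, is to note that $\widehat{X^n}\to X^n\times\ldots$ is an open immersion into $X^n\times_{\af{X}^n}\af{X}$. Then apply \cite[Proposition 3.3]{HABLICSEK2026127}, which gives $\af{X^n}=\af{X}^n$, together with the fact that pulling back along the diagonal $\af{X}\to\af{X}^n$ (strict, étale onto its image locally) yields Artin fan $\af{X}$. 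I expect this identification step to be the main obstacle, since it requires the precise form of the universal property of the Artin fan (initiality among strict maps to Artin fans, or the connected-fiber criterion).

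For firmness, $S_n$ permutes the factors of the fiber product over $\af{X}$, and the structure map to $\af{X}$ is the common map from every factor. So the structure map is $S_n$-invariant, and the induced action on $\af{\widehat{X^n}}=\af{X}$ is trivial, i.e.\ the action is firm. Finally, $\af{[\widehat{X^n}/S_n]}$ is the relative coarse moduli space of $[\af{\widehat{X^n}}/S_n]=[\af{X}/S_n]$ over $\Log$, as recalled earlier in the paper. For a trivial action this relative coarse space is $\af{X}$ itself, as noted in the firm discussion after Definition~\ref{def:fixedlocus}, where $\af{[X/G]}=\af{X}$.
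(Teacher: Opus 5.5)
Your proposal has the same structure as the paper's proof: identify $\af{\widehat{X^n}}$ with $\af{X}$, note that the induced $S_n$-action on it is trivial, then use firmness to get $\af{[\widehat{X^n}/S_n]}=\af{X}$; the paper simply cites \cite{HABLICSEK2026127} for the first and last steps, and your strict, smooth, surjective, connected-fibre argument (fibres are products of connected fibres of $X\to\af{X}$) is a sound way to fill in that citation. Drop the backup route, though: $\widehat{X^n}$ already equals $X^n\times_{\af{X}^n}\af{X}$, and the diagonal $\af{X}\to\af{X}^n$ is neither strict nor étale (where $\overline{M}$ has rank $r$, the pulled-back characteristic monoid has rank $nr$), so that parenthetical is false, and the route only closes by reinvoking the same connected-fibre criterion.
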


\begin{proof}
    Note that $\widehat{X^n}$ is obtained via Cartesian products of the log scheme $X$ over $\af{X}$. Therefore its Artin fan is $\af{X}$ (\cite{HABLICSEK2026127}). The induced $S_n$-action is trivial on the Artin fan, and thus the action is firm, showing that $\af{\widehat{X^n}}=\af{X}$ also (\cite{HABLICSEK2026127}).
\end{proof}

To compute the log Hochschild homology of $\widehat{X^n}$, we need to compute the log fixed loci i.e the intersection of the log diagonal and the twisted diagonals $i_\sigma:\widehat{X^n}\to \widehat{X^n}\times_{\af{X}}\widehat{X^n}$. 

\begin{lemma}
    Consider the log fixed point locus
    \[
    \begin{tikzcd}
        (\widehat{X^n})_{\log}^\sigma \ar[r] \ar[d] \lpbstrict & \widehat{X^n} \ar[d] \\
        \widehat{X^n} \ar[r] & \widehat{X^n} \times_{\af{X}} \widehat{X^n}.
    \end{tikzcd}
    \]
    Then $(\widehat{X^n})_{\log}^\sigma \simeq \widehat{X^{\# O(\sigma)}}$ where $O(\sigma)$ is the set of disjoint orbits of $\sigma$ acting on $\{1, \dots, n\}$.
\end{lemma}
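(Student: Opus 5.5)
The plan is to compute the fiber product directly as a functor of points, using that everything lives over the single Artin fan $\af{X}$. Since $\widehat{X^n} = X\times_{\af{X}}\cdots\times_{\af{X}}X$ and $\af{\widehat{X^n}}=\af{X}$ by the preceding lemma, we have
\[
\widehat{X^n}\times_{\af{X}}\widehat{X^n}\simeq \widehat{X^{2n}},
\]
the $2n$-fold fiber power of $X$ over $\af{X}$. Under this identification the diagonal $i$ sends $(x_1,\dots,x_n)$ to $(x_1,\dots,x_n,x_1,\dots,x_n)$. The twisted diagonal $i_\sigma$ sends it to $(x_1,\dots,x_n,x_{\sigma^{-1}(1)},\dots,x_{\sigma^{-1}(n)})$, with the indexing convention fixed by the permutation action. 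Both maps are strict by the earlier lemma, so the intersection can be computed on underlying stacks as an ordinary fiber product.

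A $T$-point of $(\widehat{X^n})^\sigma_{\log}$ is then a pair of $T$-points $(x_1,\dots,x_n)$ and $(y_1,\dots,y_n)$ of $\widehat{X^n}$ with the same image in $\widehat{X^{2n}}$. The first $n$ coordinates give $x_j=y_j$, and the last $n$ give $x_j = y_{\sigma^{-1}(j)}$. Since all coordinates already map compatibly to $\af{X}$, no extra data over the Artin fan is involved. Hence the points are exactly the tuples with $x_j=x_{\sigma(j)}$ for all $j$, that is, tuples constant on each orbit of $\sigma$.

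To make this precise I would write the fixed locus as an iterated equalizer. For each orbit $O\in O(\sigma)$, the tuple is determined by one point $x_O$ of $X$. The natural map $\widehat{X^{\# O(\sigma)}}\to \widehat{X^n}$ repeats $x_O$ along the indices in $O$; it is the fiber product over $\af{X}$ of diagonals $X\to X\times_{\af{X}}\cdots\times_{\af{X}}X$. I would then check that this map satisfies the universal property of the pullback square. This reduces to the statement that the fiber product of the diagonal $X\to X\times_{\af{X}}X$ with itself is $X$, since that diagonal is a monomorphism. It is a monomorphism because $X\times_{\af{X}}X\to X\times_{\Log}X$ is an open immersion and $X\to X\times X$ is a monomorphism.

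The main obstacle is bookkeeping rather than geometry. First, one must confirm that $\widehat{X^n}\times_{\af{X}}\widehat{X^n}$ really is $\widehat{X^{2n}}$ as log stacks. This uses that the Artin fan of $\widehat{X^n}$ is $\af{X}$, so the structure map $\widehat{X^n}\to\af{X}$ agrees with each coordinate projection composed with $X\to\af{X}$. Second, one must note that the intersection is underived but coincides with a fiber product of schemes over $\af{X}$, so no log saturation issues arise, because all maps are strict. After these identifications the orbit decomposition is the same elementary computation as for the ordinary fixed locus of $\sigma$ on $X^n$, which is $X^{\#O(\sigma)}$.
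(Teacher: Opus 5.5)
\textbf{There is a genuine gap.} It sits in the sentence ``no extra data over the Artin fan is involved'' and in the coordinate comparison built on it.

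A $T$-point of $X\times_{\af{X}}X$ is not just a pair of $T$-points of $X$. It is a pair $(x,y)$ together with an isomorphism between their images in $\af{X}(T)$, and $X\times_{\af{X}}X\to X\times X$ is not a monomorphism. Already for $V=\Aff^1$ with its toric log structure, $\widehat{V^2}=V\times\GG_m$ maps to $V\times V$ by $(v,t)\mapsto(v,tv)$, and the transposition acts by $(v,t)\mapsto(tv,t^{-1})$. Comparing only the $X$-coordinates, as you do, imposes only $x_1=x_2$, i.e.\ $v=tv$. Its solution set contains all of $\{0\}\times\GG_m$ (globally a copy of $D\times\GG_m$), whereas the log fixed locus is $\{t=1\}\cong V$.

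A quick diagnostic: your argument never uses anything that distinguishes $\widehat{X^n}\times_{\af{X}}\widehat{X^n}$ from $\widehat{X^n}\times\widehat{X^n}$. Verbatim, it would also prove that the ordinary fixed locus of $\sigma$ on $\widehat{X^n}$ is $\widehat{X^{\#O(\sigma)}}$. That is false by Remark~\ref{rem:blowup}: the ordinary fixed locus has the extra component $\{t=-1,\ v=0\}$.

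The ``make it precise'' paragraph does not close this. That the log diagonal $X\to X\times_{\af{X}}X$ is a monomorphism gives uniqueness of a factorization through $\widehat{X^{\#O(\sigma)}}$, and it settles singleton orbits. But existence for cycles of length at least $2$ is exactly the step that needs the isomorphism data.

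The repair is to keep that data. Let $a\colon X\to\af{X}$ be the structure map. Write a $T$-point of $\widehat{X^m}$ as $(\xi;\,x_j,\ \phi_j\colon a(x_j)\xrightarrow{\sim}\xi)$, up to isomorphism of $\xi\in\af{X}(T)$. Since $\widehat{X^{2n}}$ is an algebraic space, $i(p)=i_\sigma(q)$ means there is a single $\theta\colon\xi_p\to\xi_q$ such that, for all $j$, $x_j=y_j$, $\theta\phi_j=\psi_j$, $x_j=y_{\sigma^{-1}(j)}$ and $\theta\phi_j=\psi_{\sigma^{-1}(j)}$. The first pair of conditions identifies $q$ with $p$. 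The second pair then forces $x_j=x_{\sigma^{-1}(j)}$ \emph{and} $\phi_j=\phi_{\sigma^{-1}(j)}$. It is the constancy of the $\phi_j$ along orbits (in the chart above, $t=1$ rather than $t=\pm1$) that cuts the locus down to $\widehat{X^{\#O(\sigma)}}$.

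Equivalently, work in the $1$-category of algebraic spaces representable over $\af{X}$. There $\widehat{X^m}$ is the honest $m$-fold product of $X$, and $i$, $i_\sigma$ are $(\mathrm{id},\mathrm{id})$ and $(\mathrm{id},\sigma)$. Their fiber product is the equalizer of $\mathrm{id}$ and $\sigma$, and by Yoneda the equalizer of a coordinate permutation on a product is the product indexed by orbits.

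Repaired this way, your route is a global, coordinate-free alternative to the paper's proof. The paper instead works in a toric chart $\widehat{V^n}\times_{\af{V}}\widehat{V^n}\cong V\times T^{n-1}\times(T^{n-1}\times T)$, where the torus coordinates are precisely this isomorphism data.
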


\begin{proof}
    Note that it is enough to show the statement locally. Therefore, we assume that $V$ is a toric variety with dense torus $T$ with its natural log structure. In this case, $\af{V}=[V/T]$, and the symmetric product $\widehat{V^n}\times_{\af{V}}\widehat{V^n}$ can be identified with $V\times T^{n-1}\times (T^{n-1}\times T)$. The identification
    \[V\times T^{n-1}\times (T^{n-1}\times T)\to \widehat{V^n}\times_{\af{V}}\widehat{V^n}\] is given by
    \[(v,t_1,...,t_{n-1}), (t_1',.., t_{n-1}',t)\mapsto ((v, vt_1,..., vt_{n-1}), (tv, tvt_1', ..., tvt_{n-1}')).\]
    With this identification, the diagonal map is given by 
    \[(v,t_1,...,t_{n-1})\mapsto ((v,t_1,..., t_{n-1}), (t_1,...,t_{n-1}, 1)).\]
    Indeed, on $\widehat{V^n}\times_{\af{V}}\widehat{V^n}$, the diagonal map is given by 
    \[(v,t_1,...,t_{n-1})\mapsto ((v,t_1,..., t_{n-1}),(v,t_1,..,t_{n-1}))\] which translates to the above via the identification
    \[V\times T^{n-1}\times (T^{n-1}\times T)\to \widehat{V^n}\times_{\af{V}}\widehat{V^n}.\]
    Similarly, the twisted diagonal map corresponding to a permutation $\sigma\in S_n$ can be described by sending the point $(v,t_1,..., t_{n-1})$ to
    \[(v,t_1,..., t_{n-1}), (t_{\sigma^{-1}(0)}^{-1}t_{\sigma^{-1}(1)}, t_{\sigma^{-1}(0)}^{-1}t_{\sigma^{-1}(2)},...,t_{\sigma^{-1}(0)}^{-1}t_{\sigma^{-1}(n-1)},t_{\sigma^{-1}(0)})\]
    (where we regard $\sigma$ as a permutation on the letters $0,1,.., n-1$ and set $t_0$ to be 1).
    
    With some simple combinatorics, we get that the log fixed locus is given by the orbits of the action, namely, the log fixed locus can be identified with $V\times T^{\#O(\sigma)-1}=\widehat{X^{\#O(\sigma)}}$.
\end{proof}

\begin{example}
    Consider the case of $\widehat{X^2}$ with its $S_2$ action. In this case, the diagonal map (locally) can be described as $(v,t_1)\mapsto (v,t_1, t_1, 1)$ while the twisted diagonal for the non-trivial element is $(v,t_1)\mapsto (v,t_1, t_1^{-1}, t_1)$. Therefore, the log fixed locus can be described as the points $(v, 1)$, i.e with $X$.
\end{example}

\begin{remark}\label{rem:blowup}
    This example shows that the log fixed locus and the usual fixed locus do not agree in general. In fact, in the case of a log smooth pair $(X,D)$ (with $X$ being smooth and $D$ being a smooth divisor), $\widehat{X^2}$ can be described as the open set of the blow-up $B$ of $X\times X$ along $D\times D$ that is the complement of the strict transforms of the divisors $D\times X$ and $X\times D$. The usual fixed locus is the disjoint union of the diagonal $X\to B$ and a copy of $D$ on the exceptional divisor, while the log fixed locus is only $X\to B$.
\end{remark}

Using the HKR decomposition for orbifolds, we get the following result, similar to \cite{belmans2023hochschildcohomologyhilbertschemes}.

\begin{corollary}
    We have an isomorphism of $k$-vector spaces
    \[R^n\Gamma(\HHl{[\widehat{X^n}/S_n]}(\OO_{[\widehat{X^n}/S_n]})=\left(\bigoplus_{\sigma\in S_n}R^n\Gamma(\HHl{\widehat{X^{\#O(\sigma)}}}(\OO_{\widehat{X^{\#O(\sigma)}}})\right)^{S_n}.\]
\end{corollary}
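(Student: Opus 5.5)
The plan is to apply the Orbifold HKR corollary (Theorem \ref{thm:main}) to $X=\widehat{X^n}$ with $G=S_n$, and then substitute the preceding lemma, which identifies $(\widehat{X^n})^\sigma_{\log}$ with $\widehat{X^{\#O(\sigma)}}$.

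First I check the hypotheses. Theorem \ref{thm:main} needs a log smooth scheme with a finite group action respecting the log structure. We already observed that $\widehat{X^n}$ is log smooth and that $S_n$ respects its log structure. If the lemmas on l.c.i. closed immersions are to be used globally, log separatedness is also needed; the remark after them shows that locally l.c.i. closed immersions suffice for the excess intersection formula, so I would not worry about it. Since the action is firm, $\af{[\widehat{X^n}/S_n]}=\af{X}$. So the relevant self-intersection is $\widehat{X^n}\times_{\af{X}}\widehat{X^n}$, and the twisted diagonals $\i_\sigma$ are exactly those used in the preceding lemma.

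Second, Theorem \ref{thm:main} gives
\[R^n\Gamma(\HHl{[\widehat{X^n}/S_n]}(\OO))\cong\Bigl(\bigoplus_{\sigma\in S_n}R^n\Gamma(\HHl{(\widehat{X^n})^\sigma_{\log}}(\OO))\Bigr)^{S_n}.\]
Substituting the isomorphism $(\widehat{X^n})^\sigma_{\log}\simeq\widehat{X^{\#O(\sigma)}}$ from the preceding lemma gives the stated formula.

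The main obstacle is making this substitution into the $S_n$-invariants meaningful. An element $\tau\in S_n$ maps the summand for $\sigma$ to the summand for $\tau\sigma\tau^{-1}$, carrying $(\widehat{X^n})^\sigma_{\log}$ onto $(\widehat{X^n})^{\tau\sigma\tau^{-1}}_{\log}$. The identifications with $\widehat{X^{\#O(\sigma)}}$ therefore have to be chosen compatibly with this conjugation, so that the $S_n$-action on the right-hand side is the one transported from the left. There are two parts to this.

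\begin{itemize}
\item \emph{Canonical identification.} The local toric description $V\times T^{\#O(\sigma)-1}$ depends on coordinates. I would instead construct the identification globally as the "diagonal along orbits" map $\widehat{X^{O(\sigma)}}\to\widehat{X^n}$. It sends a tuple indexed by orbits to the tuple that repeats each orbit's coordinate over the orbit's elements. This map is defined functorially over $\af{X}$, so it is automatically equivariant for relabelling by $\tau$.
\item \emph{Log Hochschild is preserved.} The isomorphism must be one of log schemes, not just schemes, so that log Hochschild homology is preserved. This follows because both sides are strict over $\af{X}$: the fixed locus is cut out by strict maps, as in the lemma on strictness of $\i,\i_g$.
\end{itemize}

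Once these compatibilities are in place, the corollary follows. If desired, one can then rewrite the right-hand side via conjugacy classes and centralizers, as in \cite{belmans2023hochschildcohomologyhilbertschemes}, though that is not needed for the stated isomorphism.
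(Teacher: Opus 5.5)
Your proposal is correct and follows the paper's route: the paper obtains this corollary by applying the orbifold HKR decomposition (Theorem \ref{thm:main}) to the firm $S_n$-action on $\widehat{X^n}$ and then substituting the lemma $(\widehat{X^n})^\sigma_{\log}\simeq\widehat{X^{\#O(\sigma)}}$. Your additional step, realizing this identification canonically and strictly through the orbit-diagonal map $\widehat{X^{O(\sigma)}}\to\widehat{X^n}$ over $\af{X}$ so that the $S_n$-action on the right-hand side is well defined, is a point the paper leaves implicit.
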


While the Corollary above in general gives us infinite dimensional vector spaces, following Remark \ref{rem:blowup}, in the case of $n=2$, and $X$ being a log smooth pair, we can interpret it for the blow-up $B$. For the orbifold $[B/S_2]$, the usual (non-log) Hochschild homology decomposes as
\[HH_*([B/S_2])=HH_*(B)^{S_2}\oplus HH_*(B^{S_2})=\]
\[=\Sym^2HH_*(X)\oplus \Sym^2HH_*(D)\oplus HH_*(X)\oplus HH_*(D),\]
where the first equality comes from \cite{arinkin2019orbifold} and the second equality comes from \cite{anno2025hochschild, nordstrom2025decomposition}.

On the other hand, the log Hochschild homology decomposes as 
\[R^*\Gamma \HHl{[B/S_2]}(\OO_{[B/S_2]})=R^*\Gamma\HHl{B}(\OO_B)^{S_2}\oplus R^*\Gamma\HHl{X}(\OO_X)=\]
\[=\Sym^2R^*\Gamma \HHl{X}(\OO_X)\oplus R^*\Gamma\HHl{X}(\OO_X).\]
Here, the first equality comes from Theorem \ref{thm:main}, and the second equality comes from the fact that $B\to X\times X$ is log \'etale and log Hochschild homology is invariant under log alterations \cite{HABLICSEK2026127}.

\begin{remark}
    The above computation suggests that from a Heiseinberg representation point of view \cite{gyenge2025heisenberg, anno2025hochschild, nordstrom2025decomposition}, a right representative for the second symmetric product of $(X,D)$ is the log orbifold $[B/S_2]$. 
\end{remark}

\subsubsection{Symmetric product $[X^n/S_n]$}

Although the computation for the symmetric product $[\widehat{X^n}/S_n]$ resembles the usual computation of symmetric products for the Hochschild homology, the story for $[X^n/S_n]$ is quite different.

We start with the following observation.

\begin{example}\label{ex:counterexamplelogfixedlocus}
    Let $X$ be a separably closed field with rank-one log structure. The map $X^2 \to \Log$ factors through the inclusion of a stacky point $[\Sym^2 {\GG_m}] = B \mu_2 \ltimes \GG_m^2$. Writing $G = \mu_2 \ltimes \GG_m^2$ for this affine group scheme, the fiber product $X^2 \times_\Log X^2$ can be identified with $X^2 \times G$, where the projection onto the second factor is the ``difference'' between the two points over $\Log$. 

    The map $i : X^2 \to X^2 \times G$ is the inclusion $(x, y) \mapsto ((x, y), 0)$. The twisted diagonal $i_{(12)}$ sends $(x, y)$ to $((x, y), (-1, 1))$, where by $(-1, 1)$ we mean the nontrivial element of $\mu_2$ and the trivial element of $\GG_m$. The intersection of $i$ and $i_{(12)}$ inside $X \times_{\Log} X = X \times BG$ is therefore the empty set. So the log fixed locus is empty. 
\end{example}

After this illustrative example, we analyze the log fixed loci of the symmetric product $[X^n/S_n]$.

First, note that we have two projection maps $X^n\times_{\af{[X^n/S_n]}}X^n\to X^n\times X^n\times X^n$. This shows that we have a natural map betwen the log fixed locus and the usual (non-log) fixed locus $(X^n)^\sigma_{\log}\to (X^n)^\sigma=X^{\# O(\sigma)}$. 

\begin{lemma}
    Let $\sigma \in S_n$ be a transitive permutation with $n \geq 2$. Then $(X^n)^\sigma_{\rm log}$ is the usual fixed point subscheme $(X_0^n)^\sigma = X_0$ of the locus where the log structure is trivial. 

    For an arbitrary permutation $\sigma \in S_n$, write $F(\sigma) \subseteq O(\sigma) = \num{n}/\sigma$ for the fixed points corresponding to singleton orbits and $S(\sigma) = O(\sigma) \setminus F(\sigma)$. The map $(X^n)^\sigma_{\rm log} \to (X^n)^\sigma = X^{O(\sigma)}$ is the product $(X_0)^{\#S(\sigma)} \times X^{\#F(\sigma)}$. 
\end{lemma}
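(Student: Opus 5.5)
The plan is to work locally and reduce everything to a computation over $\Log$, in the spirit of Example \ref{ex:counterexamplelogfixedlocus}. By the remark following Definition \ref{def:fixedlocus}, the log fixed locus does not depend on the chosen Artin fan with trivial $G$-action. Moreover, $X^n\times_{\af{[X^n/S_n]}}X^n\to X^n\times_{\Log}X^n$ is an open immersion containing the diagonal and all twisted diagonals. Hence we may compute $(X^n)^\sigma_{\log}$ as the intersection of $i$ and $i_\sigma$ inside $X^n\times_{\Log}X^n$. This intersection maps to the usual fixed locus $(X^n)^\sigma=X^{O(\sigma)}$, so the question is which points, and with which scheme structure, of $X^{O(\sigma)}$ lift.

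\textbf{Transitive case.} A point of $X^{O(\sigma)}=X$ is a diagonal point $(x,\dots,x)$. Over $\Log$, the point $i(x,\dots,x)$ and the point $i_\sigma(x,\dots,x)$ differ by the element of $\Aut$ of the log structure $M_x^{\oplus n}$ given by permuting the $n$ summands by $\sigma$, exactly as in Example \ref{ex:counterexamplelogfixedlocus}. Write $\bar M_x$ for the characteristic monoid. If $\bar M_x\neq 0$, this permutation is a nontrivial element of the automorphism group $\Aut(\bar M_x^{\oplus n})$, which is a union of components of the group $G_x$ with $X^n\times_{\Log}X^n\simeq X^n\times BG_x$ locally. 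Since $\sigma\neq 1$ and $n\ge 2$, the two sections $i$ and $i_\sigma$ land in different components, so their intersection over $x$ is empty. If $\bar M_x=0$, then locally $\Log$ is a point near $x$, the maps agree with the ordinary diagonal and twisted diagonal into $X_0^n\times X_0^n$, and the intersection is the usual fixed scheme $X_0$.

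To make this rigorous, I would work étale locally on a chart $V$ (toric, as in the proof for $\widehat{X^n}$). I would write $X^n\times_{\Log}X^n$ near a point with nonzero characteristic monoid as a torsor under $\mu$-type automorphisms of $\bar M^{\oplus n}$ together with a torus part. I would then check scheme-theoretically that the component-of-automorphism discrepancy is locally constant, so it cuts out exactly the open locus $X_0$ with reduced, in fact the same as the ordinary, structure. Openness of $X_0$ and the fact that the condition ``$\sigma$ acts trivially on $\bar M^{\oplus n}$'' is open and closed on strata give the scheme structure.

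\textbf{General case.} Decompose $\{1,\dots,n\}$ into the orbits of $\sigma$, so that $X^n=\prod_{o\in O(\sigma)}X^{o}$ and $\sigma=\prod\sigma_o$. The log diagonal and twisted diagonal factor as products over $\Log$, since $\Log$ of a product is the product of the $\Log$'s, by the compatibility $\af{Y\times Z}\simeq\af{Y}\times\af{Z}$ used above. Hence the intersection is the product of the per-orbit intersections. A singleton orbit contributes $X^1_{\log}$ for the identity, which is $X$. A nonsingleton orbit contributes $X_0$ by the transitive case. This gives $(X_0)^{\#S(\sigma)}\times X^{\#F(\sigma)}$.

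\textbf{Main obstacle.} The key step is the transitive case: showing rigorously that over points with nontrivial characteristic monoid the two sections into $X^n\times_{\Log}X^n$ are genuinely disjoint, and not merely different at the level of points with a possible nonreduced intersection. I would handle it via the component group $\pi_0$ of the automorphism group of $\bar M^{\oplus n}$, which is locally constant on strata, together with an explicit chart computation.
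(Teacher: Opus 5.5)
Your proposal is correct and is essentially the paper's argument. You reduce to a transitive $\sigma$ by splitting into orbits. You then note that at a geometric point with $\bar M_{\bar x}\neq 0$ the diagonal and the $\sigma$-twisted diagonal differ by the nontrivial permutation of the factors of $\bar M_{\bar x}^{\oplus n}$, which the paper phrases via $X^n\times_{BA}X^n=X^n\times A$ with $A=\Aut(\bar M_{X^n,\bar x})$. Finally, over the trivial-log locus you recover the classical fixed scheme $X_0$.

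Your ``main obstacle'' is in fact automatic. An intersection with no geometric points over the closed complement of $X_0^n$ coincides with its open restriction over $X_0^n$. Also, the relevant fiber there is $G_x$, not $BG_x$.

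In the orbit reduction, $X^n\times_\Log X^n$ is not $\prod_o (X^o\times_\Log X^o)$, so your product justification is not right as stated. What suffices is that $i$ and $i_\sigma$ both factor through the monomorphism $\prod_o (X^o\times_\Log X^o)\to X^n\times_\Log X^n$. It is a monomorphism because an automorphism $\bigoplus_o\phi_o$ of the amalgamated sum $\bigoplus_o M_o$ is trivial if and only if each $\phi_o$ is.
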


\begin{proof}
    The first claim reduces to the second, and we can assume $\sigma$ is transitive. The ordinary fixed locus is then $(X^n)^{\sigma} = X^{\#O(\sigma)}$. Let $\bar x \to X^n$ be a geometric point at which $\bar M_{X^n, \bar x}$ is nontrivial. We need to show it does not lie in the fixed locus. 
    
    Argue as in Example \ref{ex:counterexamplelogfixedlocus}. The map $\bar x \to X^n \to \Log$ has image a stacky point $B(\Aut(\bar M_{X^n, \bar x}) \ltimes \GG_m^{{\rm rk} \bar M_{X^n, \bar x}})$. 
    
    Write $A$ for the automorphism group $\Aut(\bar M_{X^n, \bar x})$. There is a map 
    \[
        B(A \ltimes \GG_m^{{\rm rk} \bar M_{X^n, \bar x}}) \to B A,
    \]
    and it suffices to assume $\bar x = X^n$ and take the above intersection over $B A$. But then $X^n \times_{BA} X^n = X^n \times A$ and the diagonal and twisted diagonal land in different disjoint copies of $X$: $X \times \{0\}$ and $X \times \{a\}$, where $a \in A$ is a permutation of the factors of $\bar M_{X^n, \bar x} = \bar M_{X, \bar x}^n$. 
\end{proof}


\begin{remark}
    The lemma above implies that the map $X^n\to \af{[X^n/S_n]}$ is not separated. Indeed, the log fixed locus is not a closed subvariety of $X^n$ in general.
\end{remark}

\begin{remark}
    We speculate that a better ``log symmetric product'' exists which is analogous to the log Hilbert scheme of \cite{kennedy-hunt2023loghilbert}. The right construction may very well be to take the stack quotient $[X^n/_{ket} \Sigma_n]$ in the Kummer \'etale topology. The above quotient $[X^n/\Sigma_n]$ is then analogous to the dense open part of the log Hilbert scheme of a s.n.c.\ pair $(X, D)$ on which the points do not meet the boundary divisor. We return to this point in a future paper.     
\end{remark}

Using Theorem \ref{thm:main}, we obtain the following result.

\begin{corollary}
    We have an isomorphism of $k$-vector spaces
    \begin{align*}
        R^*\Gamma(\HHl{[X^n/S_n]}(\OO_{[X^n/S_n]})  &= \left(\bigoplus_{\sigma\in G} R^*\Gamma(\HHl{(X^n)^{\sigma}_{\log}},\OO_{(X^n)^{\sigma}_{\log}})\right)^{S_n}        \\
            &=\bigoplus_{\sigma\in G} R^*\Gamma(\HHl{(X_0)}, \OO_{(X_0)})^{\#S(\sigma)}\otimes R^*\Gamma(\HHl{X},\OO_{X})^{\#F(\sigma)}.
    \end{align*}
\end{corollary}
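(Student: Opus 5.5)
The plan is to derive the corollary from Theorem~\ref{thm:main} applied to $G=S_n$ acting on $X^n$, and then to evaluate each summand using the preceding lemma on log fixed loci.

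\emph{First equality.} Since $X$ is log smooth, so is $X^n$, and $S_n$ acts on $X^n$ compatibly with the log structure. Theorem~\ref{thm:main} therefore gives
\[
R^*\Gamma\bigl(\HHl{[X^n/S_n]}(\OO_{[X^n/S_n]})\bigr)=\Bigl(\bigoplus_{\sigma\in S_n} R^*\Gamma\bigl(\HHl{(X^n)^\sigma_{\log}}(\OO_{(X^n)^\sigma_{\log}})\bigr)\Bigr)^{S_n}.
\]
One caveat must be addressed. The closed-immersion statement used log separatedness, and by the remark above $X^n\to\af{[X^n/S_n]}$ is not separated. We fall back on the earlier remark that $\i,\i_\sigma$ are l.c.i.\ closed immersions into an open subset of $X^n\times_{\af{[X^n/S_n]}}X^n$. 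The excess-intersection formula of \cite[Theorem 5.11]{HABLICSEK2026127} is local on the target. We will therefore apply it on that open subset, which contains both images near the intersection, and the conclusion $\i_\sigma^*\i_*\simeq q_*(p^*(-)\otimes\Sym(\Omega^{1,\log}[1]))$ still holds.

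\emph{Second equality.} By the lemma, $(X^n)^\sigma_{\log}\cong X_0^{\#S(\sigma)}\times X^{\#F(\sigma)}$. This is a product of log smooth spaces, with $X_0$ carrying trivial log structure. Log Hochschild homology is computed by HKR as $\bigoplus H^p(\Omega^{q,\log})$, and $\Omega^{1,\log}$ of a product is the exterior sum of the factors' $\Omega^{1,\log}$. A K\"unneth argument then gives
\[
R^*\Gamma\bigl(\HHl{(X^n)^\sigma_{\log}}\bigr)\cong R^*\Gamma\bigl(\HHl{X_0}(\OO_{X_0})\bigr)^{\otimes\#S(\sigma)}\otimes R^*\Gamma\bigl(\HHl{X}(\OO_X)\bigr)^{\otimes\#F(\sigma)},
\]
which is how the exponents in the statement should be read. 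Quasi-compactness and separatedness of $X$ justify K\"unneth for coherent cohomology.

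\emph{The $S_n$-invariants.} It remains to handle the invariants, and this step needs the most care. Conjugation by $\tau\in S_n$ carries the $\sigma$-summand to the $\tau\sigma\tau^{-1}$-summand, so the invariants equal $\bigoplus_{[\sigma]}(\text{summand}_\sigma)^{C(\sigma)}$, a sum over conjugacy classes of centralizer invariants. As literally written, the statement sums over all $\sigma$ without invariants. To reconcile this, I would check how the centralizer $C(\sigma)$ acts on the tensor factors: it permutes orbits of equal length and rotates within cycles. The intended reading is probably the class-function or symmetric-power form, with invariants taken inside each tensor factor. This bookkeeping, together with Koszul signs in the permutation action on tensor products of graded spaces, is the main obstacle. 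I would state the result carefully as
\[
\bigoplus_{[\sigma]}\Bigl(\bigotimes R^*\Gamma(\HHl{X_0})\otimes\bigotimes R^*\Gamma(\HHl{X})\Bigr)^{C(\sigma)}
\]
and note that the displayed formula is to be read up to this identification.
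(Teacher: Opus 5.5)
Your proposal is correct and follows the paper's route: Theorem~\ref{thm:main} gives the first line, and the fixed-locus lemma together with K\"unneth gives the second. The only difference is that the paper simply cites the log K\"unneth formula of \cite{leonardi2025logarithmic}, whereas you re-derive it from HKR and coherent K\"unneth. You are also right that the printed second line silently drops the $S_n$-invariants, and your sum over conjugacy classes of $C(\sigma)$-invariants is the correct form.

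The paper's one-line proof does not address non-separatedness at all. If you keep your caveat, note that restricting to $U$ only gives the formula over $\i_\sigma^{-1}(U)$. Points such as $\i_\sigma(p,\dots,p)$, with $p\in X\setminus X_0$ and $\sigma$ an $n$-cycle, lie in $\overline{\i(X^n)}\setminus\i(X^n)$ and hence outside any such $U$. So you also need the base change $\i_\sigma^*Rj_*\simeq Rj'_*\,(\i_\sigma|_{\i_\sigma^{-1}(U)})^*$ for the flat open immersion $j\colon U\hookrightarrow X^n\times_{\af{[X^n/S_n]}}X^n$. With that, $q_*$ should be read as the derived pushforward along the locally closed immersion $(X^n)^\sigma_{\log}\hookrightarrow X^n$.
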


We remark that $R^*\Gamma(\HHl{(X_0)}, \OO_{(X_0)})$ is simply the (non-log) Hochschild homology of the open $X_0 \subseteq X$. 

The last equality is the consequence of the K\"unneth-formula for log Hochschild homology \cite{leonardi2025logarithmic}.

\subsection{Root stacks}
We begin with a technical lemma.

\begin{lemma}
    Let $X\to \af{X}$ and $Y\to \af{Y}$ be log schemes and a map $\pi:X\to Y$ that induces a map on the Artin fan meaning that we have a commutative diagram
    \[
    \begin{tikzcd}
        X \ar[r, "\pi"]\ar[d] & Y\ar[d]\\
        \af{X}\ar[r] & \af{Y}.
    \end{tikzcd}
    \]
    Then, there exists a pullback map on the log Hochschild homology complexes $\pi^*\HHl{Y}(\OO_Y)\to \HHl{X}(\OO_X)$.
\end{lemma}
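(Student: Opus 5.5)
The plan is to build the pullback map from the functoriality of the log diagonal. The commutative square in the statement gives a map of log self-products
\[\Pi=\pi\times\pi : X\times_{\af{X}}X \to Y\times_{\af{Y}}Y.\]
This is the map induced on fiber products by $\pi$, $\pi$ and $\af{X}\to\af{Y}$. It fits into a commutative square $\Pi\circ i_X = i_Y\circ \pi$, where $i_X$ and $i_Y$ are the log diagonals. The desired map should then be a composite of three natural transformations: base change, adjunction, and pullback along $\Pi$.

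First, by compatibility of pullbacks we have
\[\pi^* i_Y^* i_{Y*}\OO_Y \simeq i_X^*\Pi^* i_{Y*}\OO_Y.\]
Next, the commutative square gives a base change morphism $\Pi^* i_{Y*}\to i_{X*}\pi^*$. This is the mate of $\pi^*i_Y^*\simeq i_X^*\Pi^*$: compose the unit of $(\Pi^*,\Pi_*)$ with the unit of $(i_Y^*,i_{Y*})$, then with the counit of $(\pi^*,\pi_*)$, using $\Pi_* i_{X*}\simeq i_{Y*}\pi_*$. Applying $i_X^*$ and evaluating at $\OO_Y$ gives
\[\pi^*\HHl{Y}(\OO_Y)\simeq i_X^*\Pi^* i_{Y*}\OO_Y \to i_X^* i_{X*}\pi^*\OO_Y \simeq i_X^*i_{X*}\OO_X = \HHl{X}(\OO_X).\]
This uses $\pi^*\OO_Y\simeq\OO_X$. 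All functors are derived, so the construction is formal once the functors exist on the relevant derived categories.

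The main point to check is that the functors are well defined. The log self-products are representable by log algebraic spaces by the analogue of the earlier representability lemma, so $\Pi^*$ and the pushforwards make sense on quasi-coherent derived categories. The diagonals $i_X$ and $i_Y$ need not be closed immersions without log separatedness, but one can work with quasi-coherent complexes, or use the Remark that they are locally closed l.c.i.\ immersions. No base change isomorphism is needed, only the morphism, so no flatness or transversality is required. This is why the lemma gives only a map and not an isomorphism.

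As a consistency check, I would verify on $\mathcal{H}^{-1}$ that the map is the usual pullback of log forms $\pi^*\Omega^{1,\log}_Y\to\Omega^{1,\log}_X$. Here one uses the identification of conormal bundles with $\Omega^{1,\log}$ as in Lemma~\ref{lem:logfixedlocussmooth}.
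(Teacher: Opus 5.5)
Your proposal is correct and follows the paper's proof: the paper also composes $\pi^*i_Y^*i_{Y*}\OO_Y\simeq i_X^*(\pi,\pi)^*i_{Y*}\OO_Y$ with the base change morphism $(\pi,\pi)^*i_{Y*}\to i_{X*}\pi^*$, and then uses $\pi^*\OO_Y\simeq\OO_X$. One bookkeeping slip: the morphism $\Pi^*i_{Y*}\to i_{X*}\pi^*$ comes from the unit $\mathrm{id}\to\pi_*\pi^*$, the identification $i_{Y*}\pi_*\simeq\Pi_*i_{X*}$, and the counit $\Pi^*\Pi_*\to\mathrm{id}$ (equivalently, the unit of $(i_X^*,i_{X*})$ and the counit of $(i_Y^*,i_{Y*})$), not from the unit of $(\Pi^*,\Pi_*)$ and the counit of $(\pi^*,\pi_*)$ as you wrote.
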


\begin{proof}
    We have a commutative diagram
    \[
    \begin{tikzcd}
        X \ar[r, "i_X"]\ar[d, "\pi"] & X\times_{\af{X}}X \ar[d, "(\pi{,}\pi)"]\\
        Y\ar[r, "i_Y"] & Y\times_{\af{Y}}Y
    \end{tikzcd}
    \]
    From the commutativity of the diagram and from base change, we get a sequence of maps
    \[\pi^*i_Y^*i_{Y, *}\OO_Y\to i_X^*(\pi,\pi)^*i_{Y, *}\OO_Y\to i_X^*i_{X,*}\OO_X\]
    that provides the pullback map on the log Hochschild homology complexes.
\end{proof}

Let $(X, D)$ be a log smooth pair: $X$ is a smooth variety, and $D$ a smooth Cartier divisor on $X$. Consider the $n$-th root stack $\sqrt[n]{(X,D)}$ with its logarithmic structure and the projection $\pi:\sqrt[n]{(X,D)}\to X$ to $X$. 

\begin{theorem}\label{thm:rootstack}
    The natural map on log Hochschild complex $\pi^*\HHl{X}(\OO_X)\to \HHl{\sqrt[n]{(X,D)}}(\OO_{\sqrt[n]{(X,D)}})$ is a quasi-isomorphism.
\end{theorem}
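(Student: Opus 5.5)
The plan is to check the statement étale locally on $X$, using the local quotient presentation of the root stack together with the firm orbifold HKR decomposition (Theorem D of \cite{HABLICSEK2026127}, equivalently the decomposition established above). The question is local on $X$: both sides are complexes of sheaves on $\sqrt[n]{(X,D)}$, and a map of complexes is a quasi-isomorphism if it is so after étale base change. Formation of the log diagonal and of $\HHl{}$ commutes with strict étale base change, because the Artin fan of a strict étale cover is étale over the original one and fiber products over Artin fans change only by open immersions. Away from $D$ the map $\pi$ is an isomorphism and there is nothing to prove. Near $D$ we choose étale locally a function $f$ with $D=\{f=0\}$. Then $\sqrt[n]{(X,D)}\cong[U/\mu_n]$, where $U=\Spec \OO_X[t]/(t^n-f)$ is smooth, $D_U=\{t=0\}$ is smooth, and $\mu_n$ acts on $t$ by scaling. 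With the divisorial log structure from $D_U$, this presentation is a log orbifold.

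The first key step is to verify that the $\mu_n$-action on $U$ is firm. The group acts trivially on the rank-one monoid $\bar M$ generated by $t$, so it fixes the Artin fan $\af{U}$, which is locally $[\Aff^1/\GG_m]$. The second step is to compute the log fixed loci using the local toric model of the earlier symmetric product computation. For $\zeta\in\mu_n$, the twisted diagonal $x\mapsto (x,\zeta x)$ in $U\times_{\af{U}}U$ corresponds to the torus coordinate $\zeta$, while the diagonal corresponds to $1$. Hence $U^\zeta_{\log}=\emptyset$ for $\zeta\neq 1$, and $U^1_{\log}=U$. This is the analogue of the example above, in which only the diagonal survives, in contrast with the ordinary fixed locus, which contains $D_U$. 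By the orbifold HKR decomposition, $\HHl{[U/\mu_n]}(\OO)$ is therefore the $\mu_n$-equivariant object $\HHl{U}(\OO_U)\simeq \Sym(\Omega^{1,\log}_U[1])$, with no twisted sectors.

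The final step is to compare this with $\pi^*\HHl{X}(\OO_X)\simeq \pi^*\Sym(\Omega^{1,\log}_X[1])$. The map $U\to X$ is log étale: on the monoids it is the Kummer map $\mathbb N\to\mathbb N$, $1\mapsto n$, and we are in characteristic $0$. Hence $\Omega^{1,\log}_X$ pulls back isomorphically to $\Omega^{1,\log}_U$; concretely, $dt/t$ pulls back to $\tfrac1n\,df/f$. We must also check that the pullback map constructed in the preceding lemma is compatible with the HKR isomorphisms. It suffices to check this on the first-order (degree $-1$) part, which is the conormal-bundle identification of Lemma \ref{lem:logfixedlocussmooth}; both sides are then symmetric algebras on that piece, and the map is multiplicative.

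I expect the main obstacle to be the last compatibility. The HKR isomorphism comes from a formality statement, and we must make sure that the naturality square relating $\pi^*i_Y^*i_{Y,*}$ and $i_X^*i_{X,*}$ commutes with the chosen splittings. Here the splittings are the first projections, which are compatible with $(\pi,\pi)$, so I would argue naturality of the formality equivalence with respect to maps of split first-order thickenings. If that fails, the fallback is a filtration argument: both sides carry the HKR/Postnikov filtration, the map respects it, and on associated graded pieces it is the isomorphism $\pi^*\Omega^{q,\log}_X\cong\Omega^{q,\log}$ computed above.
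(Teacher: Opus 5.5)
Your proposal is correct and follows essentially the same route as the paper. You localize to $[U/\mu_n]$ with $U=\Spec A[t]/(t^n-f)$, and you observe that in $U\times_{[\Aff^1/\GG_m]}U$ the twisted diagonals sit at $u=\zeta\neq 1$, so all twisted sectors vanish. You then identify the diagonal contribution $\Sym(\Omega^{1,\log}_U[1])$ with $\pi^*\Sym(\Omega^{1,\log}_X[1])$, because the generators of $\Omega^{1,\log}_U$ are $\mu_n$-invariant pullbacks of log forms on $X$ (your Kummer log-étaleness of $U\to X$).

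Your extra check that the pullback map is compatible with the HKR identifications fills in a detail the paper passes over; checking on cohomology sheaves suffices, since these are exterior powers of the conormal sheaf and are generated in degree $-1$. One small correction: the formula should read $\pi^*(df/f)=n\,dt/t$, i.e. it is $df/f$ that pulls back, not $dt/t$.
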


\begin{proof}
    It is enough to prove the statement locally. In local coordinates, $X=\Spec A$, and $\sqrt[n]{(X,D)}=[Y/\mu_n]=[\Spec A[t]/(t^n-f)/\mu_n]$ for a generator $f$ of the ideal of the divisor where $\mu_n$ acts by multiplication by $n$-th roots of unities on $t$. The projection map $\pi$ is given by the embedding $A\to A[t]/(t^n-f)$. 
    
    We need to understand the twisted diagonals 
    \[\Spec A[t]/(t^n-f)\to \Spec A[t]/(t^n-f)\times_{\af{\sqrt[n]{(X,D)}}}\Spec A[t]/(t^n-f).\]

    In this case, the Artin fan is $\af{\sqrt[n]{(X,D)}}= [\Aff^1/\GG_m]$. The ambient space can be described locally as
    \[\Spec (A\otimes A)[t, t',u]/(t^n-f\otimes 1, t'^n-1\otimes f, t=ut').\]
    Similarly, the twisted diagonal maps can be described locally as
    \[(A\otimes A)[t, t',u]/(t^n-f\otimes 1, t'^n-1\otimes f, t=ut')\to A\]
    sending $A\otimes A\to A$ via multiplication, $t,t'\mapsto t$, and finally $u\mapsto \mu_n^k$ depending on the twisted diagonal map $\i_{\mu_n^k}$ we consider. This means that the log fixed point locus is empty, except in the case of $g=1$ (the $u$ is mapped to different constants if $g\ne 1$). In other words, the extra components of Theorem \ref{thm:main} are all 0, all contributions come from the self-intersection of the diagonal.

    In the case of the self-intersection of the diagonal
    \[i:\Spec A[t]/(t^n-f)\to \Spec A[t]/(t^n-f)\times_{\af{\sqrt[n]{(X,D)}}}\Spec A[t]/(t^n-f),\]
    the complex $i^*i_*\OO_Y$ is quasi-isomorphic to the symmetric algebra of the shifted conormal bundle $\Sym(N^{\vee}_i[1])$ according to \cite{arinkin2012self}, i.e. 
    \[i^*i_*\OO_Y\cong \Sym(\Omega^{1,\log}_Y[1]).\]
    Again, we use that the diagonal map $i$ splits completely via any of the projection maps $\Spec A[t]/(t^n-f)\times_{\af{\sqrt[n]{(X,D)}}}\Spec A[t]/(t^n-f)\to \Spec A[t]/(t^n-f)$.

    We note that the action of $\mu_n$ is trivial on the generators of $\Omega^{1, \log}_Y$ (which are the pull-back of the generators of $\Omega^{1,\log}_X$), and thus we have a quasi-isomorphism using the log version of HKR (\cite{HABLICSEK2026127})
    \[\pi^*\HHl{X}(\OO_X))\to \HHl{\sqrt[n]{(X,D)}}(\OO_{\sqrt[n]{(X,D)}}))\]
    proving our statement.
\end{proof}

The theorem above tells us that in the case of root stacks there is no contribution from the twisted sections of Theorem \ref{thm:main}, only from the diagonal embeddings.

Furthermore, Theorem \ref{thm:rootstack} implies that log Hochschild homology is invariant under root stack operations.

\begin{corollary}
    Let $(X,D)$ be a log smooth pair with $X$ being proper. Consider the $n$-th root stack $\sqrt[n]{(X,D)}$. Then, we have isomorphisms of vector spaces 
    \[R^k\Gamma(\HHl{X}(\OO_X))\cong R^k\Gamma(\HHl{\sqrt[n]{(X,D)}}, \OO_{\sqrt[n]{(X,D)}})\]
    induced by the pullback map on the log Hochschild complexes.
\end{corollary}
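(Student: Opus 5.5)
The plan is to deduce the corollary from Theorem \ref{thm:rootstack} by applying derived global sections to the quasi-isomorphism of complexes. Write $\mathcal{X}=\sqrt[n]{(X,D)}$ and $\pi:\mathcal{X}\to X$ for the coarse moduli map. The pullback map on log Hochschild complexes from the technical lemma above induces a map
\[R\Gamma(X,\HHl{X}(\OO_X))\to R\Gamma(\mathcal{X},\pi^*\HHl{X}(\OO_X))\to R\Gamma(\mathcal{X},\HHl{\mathcal{X}}(\OO_{\mathcal{X}})).\]
The first arrow is the unit of adjunction $\mathrm{id}\to\pi_*\pi^*$ followed by global sections. The second arrow is $R\Gamma$ applied to the map of Theorem \ref{thm:rootstack}. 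Since that map is a quasi-isomorphism, the second arrow is a quasi-isomorphism. Hence it suffices to show that the first arrow is a quasi-isomorphism.

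For the first arrow, I will use that $\pi$ is the coarse moduli space map of a tame Deligne--Mumford stack in characteristic 0. Therefore $\pi_*$ is exact on quasicoherent sheaves and $\pi_*\OO_{\mathcal{X}}=\OO_X$. By the projection formula, $\pi_*\pi^*F\cong F\otimes\pi_*\OO_{\mathcal{X}}\cong F$ for any complex $F$ on $X$. I will apply this to $F=\HHl{X}(\OO_X)$. By log HKR \cite{HABLICSEK2026127}, this complex is $\bigoplus_q\Omega^{q,\log}_X[q]$, a bounded complex of locally free sheaves, so the projection formula applies without unbounded-complex issues. Thus $R\Gamma(X,F)\cong R\Gamma(X,\pi_*\pi^*F)=R\Gamma(\mathcal{X},\pi^*F)$. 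Taking the $k$-th cohomology gives the stated isomorphism, and it is induced by the pullback map by construction.

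The main point to check is that the composite of the adjunction identification with the quasi-isomorphism of Theorem \ref{thm:rootstack} really is the global pullback map. I will also check that Theorem \ref{thm:rootstack}, which is proved locally, gives a global quasi-isomorphism of complexes on $\mathcal{X}$. The latter holds because the map is globally defined by the technical lemma, and being a quasi-isomorphism is local.

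Properness of $X$ is used only to ensure that the groups $H^p(X,\Omega^{q,\log}_X)$ are finite-dimensional, so the statement is an isomorphism of finite-dimensional vector spaces. As an optional cross-check, I will use the explicit formula: on $\mathcal{X}$ the complex is $\Sym(\Omega^{1,\log}_{\mathcal{X}}[1])$ with $\Omega^{1,\log}_{\mathcal{X}}=\pi^*\Omega^{1,\log}_X$. Then, since $\pi_*\OO_{\mathcal{X}}=\OO_X$, both sides equal $\bigoplus_{q-p=k}H^p(X,\Omega^{q,\log}_X)$.
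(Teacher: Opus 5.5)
Your proposal is correct and follows the paper's proof. Both apply $R\Gamma$ to the quasi-isomorphism of Theorem~\ref{thm:rootstack} and then use that $R\pi_*\OO_{\sqrt[n]{(X,D)}}\simeq\OO_X$ for the coarse moduli map of the tame root stack; you make explicit the projection-formula step $R\pi_*L\pi^*F\simeq F\otimes R\pi_*\OO_{\sqrt[n]{(X,D)}}\simeq F$, and its compatibility with the unit, which the paper leaves implicit.
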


\begin{proof}
    The statement follows from the fact that the derived pushforward of the structure sheaf of the root stack is quasi-isomorphic to the structure sheaf of $X$: $\pi_*\OO_{\sqrt[n]{(X,D)}}=\OO_X$ (see, for instance, \cite{talpo2018infinite}).
\end{proof}

This shows that log Hochschild homology is invariant under root stack operations. This improves the statement of \cite[Theorem C]{HABLICSEK2026127}. We can interpret the above result in two ways. First, the derived category of the root stack has admissible subcategories that the logarithmic derived category should not contain. Therefore, there should be a discrepancy between the Hochschild homology of root stacks \cite{bodzenta2024root,  fu2025hochschild} and the logarithmic Hochschild homology, and this discrepancy comes from the divisor. Second, Hochschild homology is invariant under ``nice" logarithmic alterations, showing that it only depends on the log structure, not the stack representing it. We believe that this statement is important for developing a ``right" version of a derived category of coherent sheaves on logarithmic schemes.

\begin{remark}
    We remark that Theorem \ref{thm:rootstack} holds not just in the case of a log smooth pair $(X,D)$, but also for general simple normal crossing divisors as well. The proof is the same, for the sake of simplicity, we restrict our attention to a smooth divisor.
\end{remark}

\bibliographystyle{alpha}
\bibliography{bib}

\end{document}